\newtheorem{thm}{Theorem}[section]
\newtheorem{lem}[thm]{Lemma}
\newtheorem*{ack}{Acknowledgments}
\newcommand{ \mint }{ {\displaystyle\int\hspace{-0.38cm}- }}
\newcommand{\R}{\mathbb{R}}
\newcommand{\F}{\mathbf{F}}
\newcommand{\M}{\mathcal{M}}
\newcommand{\du}{\nabla u}
\theoremstyle{definition}
\newtheorem{defn}[thm]{Definition}
\numberwithin{equation}{section}
\begin{document}
\title[Steady Stokes system]{Weighted estimates for generalized steady Stokes systems in nonsmooth domains}

\author{Sun-Sig Byun}
\address{Department of Mathematical Science,
Seoul National University, Seoul 151-747, Republic of Korea}
\email{byun@snu.ac.kr}

\author{Hyoungsuk So}
\address{Department of Mathematical Science,
Seoul National University, Seoul 151-747, Republic of Korea}
\email{scor1@snu.ac.kr}

\subjclass[2010]{Primary 76D07; Secondary 35J47}

\date{\today.}


\keywords{Stokes system, Calder\'{o}n-Zygmund estimate, Muckenhoupt
weight, BMO coefficient, Reifenberg domain}

\begin{abstract}
We consider a generalized steady Stokes system with discontinuous
coefficients in a nonsmooth domain when the inhomogeneous term
belongs to a weighted $L^q$ space for $2<q<\infty$. We prove the
global weighted $L^q$-estimates for the gradient of the weak
solution and an associated pressure under the assumptions that the
coefficients have small BMO (bounded mean oscillation) semi-norms and the domain is
sufficiently flat in the Reifenberg sense. On the other hand, a given
weight is assumed to belong to a Muckenhoupt class. Our result generalizes the global $W^{1.q}$ estimate of Calder\'{o}n-Zygmund with respect to the Lebesgue measure for the Stokes system in a Lipschitz domain.  
\end{abstract}

\maketitle

\section{\bf Introduction}

Let $\Omega \subset \R^n$, $n\geq 2$, be a bounded domain with
nonsmooth boundary $\partial \Omega$. In this paper, we consider the
following generalized Stokes problem with inhomogeneous data:
\begin{equation}
\label{stp}
\begin{aligned}
 \left\{ \begin{array}{rlcc}
\textrm{ div } (A(x)\nabla u) -\nabla p &= \textrm{div } \F &\textrm { in }&  \Omega \\
\textrm{ div }u &= 0 &\textrm { in }&  \Omega \\
u&=0 \, &\textrm { on }&  \partial \Omega,
\end{array}
\right.
\end{aligned}
\end{equation}
where $\F=\left( \F^i_\alpha \right)^n_{i, \alpha=1}$ is a given
matrix-valued function in $L^q_\omega(\Omega)^{n^2}$ which we will
specify later, as is the tensor matrix-valued function $A= \left(
A^{\alpha \beta}_{ij} \right )^n_{i,j,\alpha, \beta=1} : \R^n \to
\R^{n^2 \times n^2}$, satisfying uniform ellipticity and boundedness,
namely; there exist positive constants $\nu$ and $L$ such that
\begin{equation}
\label{ell} \nu |\xi|^2 \leq A(x)\xi : \xi, \quad |A(x)| \leq L
\quad \forall \xi \in \R^{n^2}, \textrm{ a.e. } x \in \R^n.
\end{equation}
Here $(\cdot: \cdot)$ denotes the standard inner product in
$\R^{n^2}$ and the unknowns are the velocity $u=(u^1, \dots, u^n)$
and the pressure $p$.

The generalization of the classical steady Stokes system consists of
general second order elliptic equations in divergence form instead
of Laplace equations. This type of generalization can be found in
\cite{Fuch, Solo} and references given there. For this
generalization, we allow the tensor matrix of coefficients $A$ to be
discontinuous, but we impose a small BMO (bounded mean oscillation)
condition, as we now state.
\begin{defn}
We say that $A$ is $(\delta, R)$-vanishing if
\begin{equation}
\label{sb} \sup_{ 0<r\leq R }  \sup_{ x \in \R^n } \mint_{B_r(y)}
\left |A(x)-\overline{A}_{B_r(y)} \right |^2 dx \leq \delta^2,
\end{equation}
where $\overline{A}_{B_r(y)}=\mint_{B_r(y)} A(x) dx$ is the integral
average of $A$ over the open ball $B_r(y)$. We will clarify $\delta$
and $R$ later after Definition \ref{Reif}.
\end{defn}

Solvability and the regularity properties of solutions of the Stokes
system form the fundamental part of fluid dynamics. In particular,
there have been notable research activities on the boundary
regularity regularity in the generalized Stokes system on the
Lipschitz domain (see \cite{Gia, Macha}) and interior regularity of
Stokes system (see \cite{Brei, Dan, di2013, di2014}). In the classical
approach, which uses the representation formulas in terms of
singular operators and commutators, one needs to overcome the
obstacle coming from the non-graph domain, if one wants to deal with a
nonsmooth domain beyond the Lipschitz category. In this situation,
we cannot use directly the results obtained by using the representation
formula, so we need other approach like a maximal function, as we will
use here. The main goal of the present article is to develop a
Calder\'{o}n-Zygmund type theory for the steady Stokes system
(\ref{stp}) in the setting of weighted Sobolev and Lebesgue spaces.
This result will provide a new result in this literature, even for
the unweighted case (in standard Sobolev and Lebesgue spaces).

We now introduce the definition of a weak solution pair of the
problem (\ref{stp}).

\begin{defn}
Let $\F \in L^2(\Omega)^{n^2}$. Then $u \in W^{1, 2}_{0,
\sigma}(\Omega)^n$ is called a weak solution of the Stokes system
(\ref{stp}), if
\begin{equation}
\label{st} \int_\Omega A(x) \nabla u : \nabla \phi \, dx=
\int_\Omega \F : \nabla \phi \, dx
\end{equation}
holds for all $\phi \in W^{1, 2}_{0, \sigma}(\Omega)^{n} = \{v \in
W^{1, 2}_0(\Omega)^n : \textrm{ div }v=0\}$. If $u$ is such a weak
solution and $p \in L^2(\Omega)$ satisfies
\begin{equation*}
 \int_{\Omega} A(x)\du : \nabla \phi -  p \textrm{ div }\phi \, dx = \int_{\Omega} \F : \nabla \phi \, dx
\end{equation*}
for all $\phi \in W^{1, 2}_0(\Omega)^n$, then $(u, p)$ is called a
weak solution pair to (\ref{stp}), and $p$ is called an associated
pressure of $u$.
\end{defn}

In the next section we will return to the existence and uniqueness up to
a constant of a weak solution pair to (\ref{stp}) over a bounded
domain $\Omega$ with the following geometric regularity condition.
\begin{defn}
\label{Reif} We say that $\Omega \subset \mathbb{R}^{n}$ is
$(\delta,R)$-Reifenberg flat if for every $x\in \partial \Omega$ and
every $r\in (0,R]$, there exists an $(n-1)$ dimensional plane
$L(x,r)$ passing through $x$ such that
$$ \frac{1}{r}D[\partial \Omega \cap B_{r}(x),L(x,r)\cap B_{r}(x)]\leq \delta,$$
where $D$ denotes the Hausdorff distance which is defines as
$$D(E,F):=\max \left \{ \sup_{x\in E}dist(x,F), \sup_{y\in
F}dist(y,E) \right\},\quad E, F \in \mathbb{R}^n.$$
\end{defn}

We can assume that $R$ in both (\ref{sb}) and
Definition\ref{Reif} equals to 1 by scaling the system, while
$\delta$ is still invariant under such a scaling. Note that the
concept of $\delta$-Reifenberg flatness is a meaningful one for a
small $\delta < \frac{1}{2^{n+1}}$ (see \cite{Toro}). In this paper,
we assume $\delta$ to be a small positive constant so that
$\Omega$, $(\delta,R)$-Reifenberg flat domain, is also a
non-tangentially accessible domain (see \cite{Kenig}). In
particular, such domains are John domain (see \cite{Aika}) and then Sobolev-Poincar\'{e} inequality holds on this domain (see \cite{Buck}).  For the properties of the Reifenberg flat domain, we refer to papers \cite{Guang, Kenig, Mila, Toro}.

 We investigate this problem in weighted function spaces. More specifically, we consider Lebesgue
 spaces with respect to the measure $\omega dx$, where $\omega$ is a weight in the Muckenhoupt class $A_s$ with $1<s<\infty$.
 This is the class of nonnegative and locally integrable weight function in $\R^n$, for which
\begin{equation}
\label{aq} [\omega]_s=\sup_{y\in \R^n}\sup_{r>0}\left(
\mint_{B_r(y)}\omega(x)\, dx \right)\left(
\mint_{B_r(y)}\omega(x)^{\frac{-1}{s-1}}\, dx \right)^{s-1} <\infty.
\end{equation}
 As in \cite{Far}, typical examples of Muckenhoupt weights are
$$\omega(x)=|x|^\alpha, \quad -n < \alpha <n(q-1),$$
$$\omega(x)=\textrm{dist}(x, \mathbf{M})^\alpha, \quad -(n-j) < \alpha <n(s-j)(s-1),$$
where $\mathbf{M}$ is a compact $j$-dimensional Lipschitzian
submanifold. Therefore through choosing a particular weight function, the
advanced theory can be used for a better control of the solution, such
as in the neighborhood of a point or close to the boundary.

 Given a weight $\omega \in A_{s}$, the weighted spaces is defined as
$$L^s_\omega(\Omega)= \left \{f \in L^1_{loc}(\Omega) : \|f\|_{L^s_\omega(\Omega)}=\left( \int_\Omega |f(x)|^s\omega(x)dx \right)^{\frac 1s} < \infty \right\},$$
and we let
\begin{equation}
\label{weight} \omega(E)=\int_{E}\omega(x)\, dx.
\end{equation}

 The main result of the paper is the following Calder\'{o}n-Zygmund type estimate for a weak solution pair to (\ref{stp}).

\begin{thm}
[Main result] \label{mainthm}
 Let  $\omega \in A_{\frac q2}$ with $2<q<\infty$. Assume $\F \in L^q_\omega(\Omega)^{n^2}$. Then there exists a small constant $\delta=\delta(n, q, \nu, L, \omega)>0$ such that if $A(x)$ is $(\delta, R)$-vanishing and $\Omega \subset \R^n$,  $n \geq 2$, is $(\delta, R)$-Reifenberg flat domain, then a weak solution pair $(u, p)$ satisfies
$$ \du \in L^q_\omega(\Omega)^{n^2}, \:  p \in L^q_\omega(\Omega) \textrm{ with the estimate }$$
\begin{equation}
\label{main}\| \nabla u
\|_{L^q_\omega(\Omega)^{n^2}}+\|p\|_{L^q_\omega(\Omega)} \leq c \|
\F \|_{L^q_\omega(\Omega)^{n^2}}, \end{equation} where the constant
$c$ depends only on $n, q, \nu, L, \omega, \Omega$.
\end{thm}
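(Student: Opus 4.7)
The plan is to adapt the Caffarelli-Peral scheme to the generalized Stokes system: establish local comparison estimates between $\du$ and the gradient of a reference solution admitting pointwise control, convert these into a level-set (``good-$\lambda$'') inequality for the Hardy-Littlewood maximal function $\M(|\du|^2)$, and conclude by weighted norm theory, where the hypothesis $\omega \in A_{q/2}$ is used precisely to exploit the boundedness of $\M$ on $L^{q/2}_\omega$.

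For the comparison step, in the interior I take a ball $B_{2r}(x_0) \Subset \Omega$, freeze the coefficients at $\overline{A}_{B_{2r}(x_0)}$, and solve the associated homogeneous constant-coefficient Stokes system with the same boundary data, producing a pair $(v, \pi)$. An energy estimate combined with the $(\delta, R)$-vanishing hypothesis and a reverse H\"older inequality for $\du$ should yield
\begin{equation*}
\mint_{B_{2r}(x_0)} |\du - \nabla v|^2 \, dx \leq c\, \delta^{2\sigma} \mint_{B_{4r}(x_0)} |\du|^2 \, dx + c \mint_{B_{4r}(x_0)} |\F|^2 \, dx,
\end{equation*}
and by classical Campanato theory for constant-coefficient Stokes systems, $\nabla v$ is uniformly bounded on $B_r(x_0)$. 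At a boundary point $x_0 \in \partial\Omega$, I use $(\delta,R)$-Reifenberg flatness to approximate $\Omega \cap B_r(x_0)$ by a half-ball $B_r^+(x_0)$ after a suitable rotation, and perform the analogous freeze-and-flatten comparison, invoking the global Lipschitz regularity up to the flat boundary for the homogeneous constant-coefficient Stokes system on a half-ball with vanishing Dirichlet data on the flat portion.

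Combining the two comparisons with a Vitali-type covering adapted to Reifenberg domains, in the spirit of \cite{Guang, Mila}, should yield a decay inequality of the form
\begin{equation*}
\bigl|\{\M(|\du|^2) > N\lambda\} \cap \Omega\bigr| \leq \varepsilon\, \bigl|\{\M(|\du|^2) > \lambda\} \cap \Omega\bigr| + \bigl|\{\M(|\F|^2) > \delta^2 \lambda\} \cap \Omega\bigr|,
\end{equation*}
with $\varepsilon \to 0$ as $\delta \to 0$. Multiplying by $\lambda^{q/2-1}$ and integrating against $\omega\,d\lambda$, the $A_{q/2}$ property converts Lebesgue-measure level sets into weighted ones at the cost of a controllable power, and after absorption the estimate $\|\du\|_{L^q_\omega(\Omega)} \leq c\|\F\|_{L^q_\omega(\Omega)}$ follows. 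The pressure bound is recovered by using a weighted Bogovskii operator to test the weak formulation against $\phi$ solving $\operatorname{div}\phi = |p|^{q-2}p\,\omega$ minus its mean, which yields $\|p\|_{L^q_\omega(\Omega)} \leq c\|\F\|_{L^q_\omega(\Omega)}$. The main obstacle is the boundary comparison: unlike in the scalar case, $u$ cannot simply be extended by zero across the flat approximating piece because this destroys the divergence-free constraint, so a cutoff combined with a Bogovskii correction is required to transfer the problem to the half-ball while preserving the solenoidal condition of the comparison function.
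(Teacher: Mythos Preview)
Your overall strategy---local comparison, good-$\lambda$ via Vitali covering, and weighted norm conclusion---matches the paper's. The substantive divergence is in how you handle the pressure.

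The paper does \emph{not} separate the pressure from the gradient. It runs the maximal-function argument on the combined quantity $\M(|\du|^2+|p|^2)$: the comparison lemmas (interior Lemma~3.3 and boundary Lemma~3.7) produce a reference pair $(v,p_v)$ with $\|\nabla v\|_{L^\infty}+\|p_v\|_{L^\infty}\le c$, and the level-set inclusion (Lemma~3.8) and its weighted version (Lemma~3.9) are stated for $\M(|\du|^2+|p|^2)$. The final power-decay estimate then yields $\|\du\|_{L^q_\omega}+\|p\|_{L^q_\omega}\le c$ in one stroke. Your plan instead estimates $\|\du\|_{L^q_\omega}$ first and recovers $p$ by testing against a $\phi$ with $\operatorname{div}\phi=|p|^{q-2}p\,\omega-\text{mean}$. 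For that to close you need $\|\nabla\phi\|_{L^{q'}_{\omega^{1-q'}}}\le c\,\|p\|_{L^q_\omega}^{q-1}$, i.e.\ a weighted Bogovski\u{\i} operator on a John (Reifenberg) domain for Muckenhoupt weights. This is not covered by the references in the paper (Lemma~2.8 from \cite{Acos} is unweighted), and while such results exist in the literature they require nontrivial justification that you have not supplied. The paper's device of carrying $|p|^2$ through the maximal function sidesteps this entirely.

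Your stated ``main obstacle'' at the boundary is also slightly off. The paper does not extend $u$; it extends the reference function $v$, which solves the constant-coefficient homogeneous problem on $B_4^+$ with $v=0$ on the flat piece $T_4$. Because $v$ vanishes on $T_4$ in the trace sense, its zero extension $V$ to $B_4$ lies in $W^{1,2}(B_4)$ with $\nabla V=\nabla v\cdot\chi_{B_4^+}$, hence $\operatorname{div}V=0$ a.e.\ automatically---no Bogovski\u{\i} correction is needed there. The actual work at the boundary (Lemmas~3.5--3.7) is a three-step chain $u\to h\to w\to v$ combined with a compactness argument (Lemma~3.6) to pass from the Reifenberg domain $\Omega_4$ to the half-ball $B_4^+$.
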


 Our result is an extension of the results in \cite{byun:2008} to the context of Newtonian fluids and weighted spaces. In \cite{byun:2008}, the authors studied
\begin{equation}
\label{ellsys}
\begin{aligned}
\begin{array}{rll}
\textrm{div }(A(x)\du)&= \textrm{div }\F &\textrm{ in } \Omega \\
    u&=0 &\textrm{ on } \partial \Omega
\end{array}
\end{aligned}
\end{equation}
 under similar conditions. In that article, Calder\'{o}n-Zygmund type estimate for the weak solution to (\ref{ellsys}) was proved. This type estimate for an elliptic equation on the Reifenberg flat domain was first studied in \cite{byun:2004} and then has been extended for system and parabolic problems.

 The paper is organized as follows: In section 2, we introduce some notations and weighted Lebesgue space. Then we collect and prove the necessary statements needed for the main theorem. In section 3, we study global regularity of the gradient of weak solutions and an associated pressure to the Stokes system (\ref{stp}).

\section{\bf Preliminaries}
\subsection{\bf Notations}
\begin{enumerate}
\item $B_r(y)=\{x\in\mathbb{R}^n: |x-y|<r\}$ and $B_r=B_r(0)$.
\item $\Omega_r(y)=\Omega\cap B_r(y)$, $\Omega_r=\Omega_r(0)$, $B^+_r=B_r \cap \{x\in\mathbb{R}^n:x_n>0\}$ and $T_r=B_r \cap \{x\in\mathbb{R}^n:x_n=0\}$.
\item $\partial\Omega$ is the boundary of the domain $\Omega$, and $\partial_{w}\Omega_r(x)=\partial\Omega\cap B_r(x)$.
\item $|E|$ denotes the Lebesgue measure of the set $E\subset\mathbb{R}^n$.
\item $\overline{u}_{E}=\mint_{E}u(x)dx=\frac{1}{|E|}\int_{E}u(x)dx$ is the integral average of $u$ over $E$.
\item For vector valued function $u : \Omega \to \R^n$, we write $u \in X^n$ if each component of $u$ belongs to the function space
$X$.
\end{enumerate}

\vspace{0.2cm}

\subsection{\bf Weighted Lebesgue spaces and technical lemmas}~\\

 An important property of the Muckenhoupt weight is the relation with the Lebesgue measure as in the following lemma.
\begin{lem}
\cite{Meng} \label{wmrel} Let $E$ be measurable subset of $\Omega$
and $\omega \in A_s$ for some $1<s<\infty$.
 Then there exist positive constant $\mu$ and $\tau \in (0,1)$ independent of $B$, $E$, and $\omega$ such that
\begin{equation*}
\frac{1}{[\omega]_s} \left ( \frac{|E|}{|B|} \right)^s \leq
\frac{\omega(E)}{\omega(B)} \leq \mu \left ( \frac{|E |}{|B|}
\right)^\tau,
\end{equation*}
where $B$ is a ball and $E$ is a measurable subset of $B$.\\
\end{lem}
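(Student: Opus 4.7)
The plan is to treat the two inequalities separately, since they come from genuinely different facts in the theory of Muckenhoupt weights. The lower bound is essentially a direct rearrangement of the $A_s$ condition (\ref{aq}) via H\"older's inequality, while the upper bound requires the self-improving (reverse H\"older) property of $A_s$ weights and is the harder direction.

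For the lower bound, the starting point is the identity
$$ |E| \;=\; \int_E \omega(x)^{1/s}\, \omega(x)^{-1/s}\, dx. $$
Applying H\"older's inequality with conjugate exponents $s$ and $s/(s-1)$, then enlarging the domain of the second factor from $E$ to $B$ using $E \subset B$ and positivity of $\omega^{-1/(s-1)}$, yields
$$ |E|^s \;\leq\; \omega(E) \left( \int_B \omega^{-1/(s-1)}\,dx \right)^{s-1}. $$
The $A_s$ bound (\ref{aq}) rewritten as $\bigl( \int_B \omega^{-1/(s-1)} \bigr)^{s-1} \leq [\omega]_s\, |B|^s / \omega(B)$ is substituted into the right-hand side, and a rearrangement produces the claimed lower inequality.

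For the upper bound, the crucial input is the reverse H\"older inequality for $A_s$ weights: since $\omega \in A_s$, there exist constants $\varepsilon > 0$ and $C > 0$ (depending only on $n$, $s$, $[\omega]_s$) such that
$$ \left( \frac{1}{|B|} \int_B \omega^{1+\varepsilon}\,dx \right)^{1/(1+\varepsilon)} \;\leq\; \frac{C}{|B|}\int_B \omega\, dx $$
for every ball $B$. Taking this as given, H\"older's inequality applied to $\omega(E) = \int_E \omega\cdot 1\, dx$ with exponents $1+\varepsilon$ and $(1+\varepsilon)/\varepsilon$, followed by the reverse H\"older estimate on $B \supset E$, gives
$$ \omega(E) \;\leq\; C\, \omega(B) \left( \frac{|E|}{|B|} \right)^{\varepsilon/(1+\varepsilon)}. $$
Setting $\tau := \varepsilon/(1+\varepsilon) \in (0,1)$ and $\mu := C$ delivers the desired upper bound.

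The main obstacle is establishing the reverse H\"older inequality itself, which is nontrivial and typically proved by a Calder\'on--Zygmund stopping-time decomposition yielding a good-$\lambda$ inequality for $\omega$. Because the lemma is quoted from \cite{Meng}, this is precisely the step that requires invoking the cited literature rather than a short self-contained calculation; the remaining manipulations, on both sides, reduce to routine H\"older estimates combined with the definition of $A_s$.
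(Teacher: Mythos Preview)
Your argument is correct. The lower bound is exactly the standard H\"older-inequality derivation of the strong doubling property, and your upper bound via the reverse H\"older inequality is the textbook route; both steps are valid as written.

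The paper's proof differs slightly in the upper bound. Rather than invoking the reverse H\"older inequality directly, it first appeals to the Jones--Rubio de Francia factorization (Stein \cite[Proposition 9, V.5]{stein1993}) to write $\omega = \omega_1 \omega_2^{1-s}$ with $\omega_1, \omega_2 \in A_1$, and then cites Torchinsky \cite[Proposition 4.5, IX]{Tor} for the reverse doubling estimate. Your approach is more self-contained in the sense that reverse H\"older is the single black box, whereas the paper's chain of citations (factorization plus a proposition for $A_1$ weights) ultimately rests on the same self-improvement phenomenon but packages it differently. For the lower bound the paper simply cites \cite[Lemma 3.3]{Meng}; your explicit H\"older computation is precisely the proof behind that citation. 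Either route yields constants depending only on $n$, $s$, and $[\omega]_s$.
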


\begin{proof}
 From the \cite[Lemma 3.3]{Meng}, $A_s$ has strong doubling property,
$$ \omega(B)\leq [\omega]_s \left( \frac{|B|}{|E|} \right)^s \omega(E),$$
where $B$ is a ball and $E$ is a measurable subset of $B$. By
\cite[Proposition 9, V.5]{stein1993}, we can see that there are
$\omega_1$ and $\omega_2$ for each $\omega \in A_s$, $1 \leq s <
\infty$,  so that $\omega=\omega_1\omega^{1-s}_2$. Then $A_s$ has
also reverse doubling property, using \cite[Proposition 4.5,
IX]{Tor}, as the following :
$$\omega(E) \leq \mu \left( \frac{|B|}{|E|} \right)^\tau \omega(B)$$
for some $\tau \in (0,1)$. Combining these two inequalities, the
proof is completed.
\end{proof}
 We introduce the following lemma, which is derived by the standard measure theory.

\begin{lem}
\cite{Meng} \label{lem201} Suppose that $f$ is a nonnegative
measurable function in a bounded domain $\Omega$ in $\R^n$ and
$\omega \in A_s$, $1<s<\infty$. Then
\begin{equation*}
f \in L^{s}_\omega(\Omega) \quad \textrm{ if and only if } \quad S=
\sum_{k\geq1}m^{ks}\omega \left( \{x\in \Omega: f(x)>\theta
m^{k}\}\right)<\infty
\end{equation*}
for some constants $\theta>0$ and $m>1$.\\

Moreover, we have
\begin{equation*}
c^{-1}S\leq \|f\|_{L^{s}_\omega(\Omega)}^{s}\leq
c(\omega(\Omega)+S),
\end{equation*}
where $c=c(\theta, m, s)$ is a positive constant.
\end{lem}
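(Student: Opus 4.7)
The plan is to reduce the claim to the standard layer-cake (distribution function) representation for the weighted norm and then discretize the resulting integral using that $t \mapsto \omega(\{f>t\})$ is monotone decreasing. Concretely, I would start by writing
\begin{equation*}
\|f\|_{L^s_\omega(\Omega)}^s = \int_\Omega f(x)^s \omega(x)\, dx = s \int_0^\infty t^{s-1} \omega\!\left(\{x\in\Omega: f(x)>t\}\right) dt,
\end{equation*}
which follows from Fubini applied to the measure $\omega\, dx$ exactly as in the unweighted case. This representation is the bridge between the integral quantity on the left of the equivalence and the discrete sum $S$ on the right.

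Next, I would split the half-line $(0,\infty)$ into $(0,\theta]$ together with the geometric annuli $I_k = [\theta m^{k-1},\theta m^k)$ for $k \geq 1$. Over the first piece, monotonicity gives the trivial bound $\int_0^\theta t^{s-1}\omega(\{f>t\})\,dt \leq \frac{\theta^s}{s}\omega(\Omega)$, which will supply the $\omega(\Omega)$ term in the upper estimate. Over each annulus $I_k$, the decreasing character of $t\mapsto \omega(\{f>t\})$ yields the sandwich
\begin{equation*}
\omega\!\left(\{f>\theta m^k\}\right)\int_{I_k} t^{s-1}\,dt \leq \int_{I_k} t^{s-1}\omega(\{f>t\})\,dt \leq \omega\!\left(\{f>\theta m^{k-1}\}\right)\int_{I_k} t^{s-1}\,dt,
\end{equation*}
and the elementary integral $\int_{I_k} t^{s-1}\,dt = \frac{\theta^s}{s}(m^s-1) m^{(k-1)s}$ is a fixed constant multiple of $m^{ks}$.

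Summing over $k\geq 1$ then gives both inequalities. The upper bound reads
\begin{equation*}
\|f\|_{L^s_\omega(\Omega)}^s \leq \theta^s \omega(\Omega) + (m^s-1)\theta^s \sum_{k\geq 1} m^{(k-1)s}\omega(\{f>\theta m^{k-1}\}),
\end{equation*}
and after a shift of index the tail is exactly a constant multiple of $S$ plus the single term $\omega(\{f>\theta\})\leq\omega(\Omega)$, giving $\|f\|_{L^s_\omega(\Omega)}^s \leq c(\omega(\Omega)+S)$. Symmetrically, keeping the lower sandwich term produces $S\leq c\|f\|_{L^s_\omega(\Omega)}^s$. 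In both directions the constant depends only on $\theta$, $m$ and $s$, as stated.

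I do not anticipate a genuine obstacle; the statement is a purely measure-theoretic discretization that requires no use of the $A_s$ property itself (the weight enters only through the measure $\omega\, dx$). The only points to handle carefully are the index bookkeeping when passing from the continuous integral to the series, and writing the constants explicitly so that $c$ is independent of $f$. The appearance of the additive $\omega(\Omega)$ term is unavoidable because it accounts for mass at the level sets where $f$ is small (below $\theta$), which the series $S$ does not see.
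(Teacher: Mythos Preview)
The paper does not give its own proof of this lemma; it is simply quoted from \cite{Meng} and stated without argument. Your layer-cake discretization is the standard proof and is correct as written, including the index shift and the absorption of the $k=0$ term into $\omega(\Omega)$. One small remark on your closing comment: the $A_s$ hypothesis is indeed not used in any essential way, but local integrability of $\omega$ (which is part of the definition of $A_s$) is what guarantees $\omega(\Omega)<\infty$ on the bounded domain, so that the additive term in the upper bound is finite and the estimate has content.
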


We use the famous Hardy-Littlewood maximal function, which allows us
to control the local behavior of a function in a scaling invariant
way for qualitative study of $L^p_\omega$ functions.

\begin{defn}
\label{defn202} Let $f$ be a locally integrable function on
$\mathbb{R}^n$. Then the function
$$
(\mathcal{M}f)(x)=\sup _{r>0}\frac{1}{|B_{r}(x)|}\int
_{B_{r}(x)}|f(y)|dy$$ is called the Hardy-Littlewood maximal
function of $f$.

Further, for a function defined on a bounded $U \, \subset
\mathbb{R}^n$, we can define the Hardy-Littlewood maximal function
locally by
$$
\mathcal{M}_{U}f =\mathcal{M} \left( f  \mathbf  { \chi }
_{U}\right),$$ where $\chi$ is the standard characteristic function
on $U$.
\end{defn}

\vspace{0.1in}
 In the following lemma, we observe basic properties of the
Hardy-Littlewood maximal function.

\begin{lem}
\cite{Muck, stein1993} \label{lem203} Suppose $\omega \in A_s$ for
some $s \in (1, \infty)$, then there exists a constant $C=C(n, s,
[\omega]_s)>0$  such that
\begin{equation}
\label{202} \frac{1}{C}    \| f \|_{L^{s}_\omega(\R^n)} \leq
\|\mathcal{M}f \| _{L^{s}_\omega(\R^n)} \leq C \| f\|
_{L^{s}_\omega(\R^n)}.
\end{equation}
In particular, if $s=1$ and $\omega(x)\equiv 1$, then
$$\big| \{ x\in \R^n : (\mathcal{M})(x) > \nu \} \big | \leq \frac{C}{\nu}\int |f(x)| dx$$
for every $\nu>0$, where $C=C(n)$.
\end{lem}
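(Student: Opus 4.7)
The statement splits into three inequalities: the weighted lower bound $\|f\|_{L^s_\omega} \lesssim \|\mathcal{M}f\|_{L^s_\omega}$, the weighted upper bound $\|\mathcal{M}f\|_{L^s_\omega} \lesssim \|f\|_{L^s_\omega}$, and the unweighted weak-type $(1,1)$ inequality. My plan is to dispatch the lower bound immediately, then handle the weak-type $(1,1)$ estimate by a Vitali-covering argument, and finally address the weighted strong-type inequality through a weighted weak-type $(s,s)$ bound combined with Marcinkiewicz interpolation and the self-improvement (open-end) property of the $A_s$ class.

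For the lower bound, the Lebesgue differentiation theorem gives $|f(x)| \le (\mathcal{M}f)(x)$ for a.e.\ $x \in \R^n$, so integration against $\omega\,dx$ yields the inequality with constant $1$, and no weight assumption beyond local integrability of $\omega$ is needed. For the unweighted weak-type $(1,1)$, I would fix $\nu>0$, cover the level set $\{\mathcal{M}f > \nu\}$ by balls $B_{r(x)}(x)$ satisfying $\mint_{B_{r(x)}(x)} |f|\,dy > \nu$, apply the Vitali (or Besicovitch) covering lemma to extract a disjoint subfamily $\{B_j\}$ whose $5$-fold dilations cover the level set, and then estimate
\begin{equation*}
|\{\mathcal{M}f > \nu\}| \le 5^n \sum_j |B_j| \le \frac{5^n}{\nu} \sum_j \int_{B_j} |f|\,dy \le \frac{5^n}{\nu}\int_{\R^n} |f|\,dy.
\end{equation*}

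The heart of the lemma is the weighted strong-type $(s,s)$ bound, and this is the step I expect to be the main obstacle. The strategy is first to prove a weighted weak-type inequality: for $\omega \in A_s$ and any $\nu > 0$,
\begin{equation*}
\omega\bigl(\{\mathcal{M}f > \nu\}\bigr) \le \frac{C([\omega]_s, n, s)}{\nu^s} \int_{\R^n} |f|^s \omega\,dx.
\end{equation*}
The proof again uses Vitali selection on the level set, but at each selected ball $B_j$ one writes
\begin{equation*}
\nu < \mint_{B_j} |f|\,dy \le \left(\mint_{B_j} |f|^s \omega\,dy\right)^{1/s}\!\left(\mint_{B_j} \omega^{-1/(s-1)}\,dy\right)^{(s-1)/s}
\end{equation*}
by Hölder, then invokes the $A_s$ condition \eqref{aq} to trade the last factor for $\omega(B_j)^{-1/s}|B_j|$. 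Summing over the disjoint $B_j$ and using strong doubling (Lemma \ref{wmrel}) to pass from $\omega(B_j)$ to $\omega(5B_j)$ completes the weak-type bound.

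To upgrade weak-type to strong-type, I cannot directly apply Marcinkiewicz between $(s,s)$ and $(\infty,\infty)$ since we only have a weak-type bound at the endpoint. Here I would invoke the open-end property of Muckenhoupt classes: there exists $\varepsilon = \varepsilon(n,s,[\omega]_s) > 0$ such that $\omega \in A_{s-\varepsilon}$, which follows from the reverse Hölder inequality satisfied by every $A_s$ weight. Applying the weighted weak-type estimate just proved at the exponent $s-\varepsilon$ (which is permissible since $\omega \in A_{s-\varepsilon}$), together with the trivial $L^\infty$ bound $\|\mathcal{M}f\|_\infty \le \|f\|_\infty$, the Marcinkiewicz interpolation theorem in the weighted measure space $(\R^n, \omega\,dx)$ yields the strong-type $(s,s)$ inequality with constant depending only on $n$, $s$, and $[\omega]_s$. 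The delicate point in the whole argument is precisely the self-improvement $A_s \Rightarrow A_{s-\varepsilon}$, for which I would cite the reverse Hölder inequality for $A_s$ weights as in Stein's book rather than reprove it.
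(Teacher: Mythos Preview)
Your argument is correct and is essentially the classical proof one finds in the cited references. Note, however, that the paper does not supply a proof of this lemma at all: it is stated with citations to \cite{Muck, stein1993} and taken as a black box, so there is no ``paper's proof'' to compare against. What you have written is a faithful outline of the standard route (Lebesgue differentiation for the lower bound, Vitali covering for weak-type $(1,1)$, H\"older plus the $A_s$ condition for weighted weak-type, then self-improvement $A_s \subset A_{s-\varepsilon}$ via reverse H\"older and Marcinkiewicz interpolation), which is precisely what those references contain.
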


For the global estimate on a $(\delta, 1)$-Reifenberg flat domain,
we use weighted version of the Vitali covering lemma as following.
We refer Lemma 3.8 in \cite{Meng} for the proof.

\begin{lem}
\cite{Meng} \label{lem206} Let $\omega \in A_s$ for some $s \in (1,
\infty)$ and let $\mathfrak{C}$ and $\mathfrak{D}$ are measurable
sets, $ \mathfrak{C} \subset \mathfrak{D} \subset \Omega$ where
$\Omega$, $(\delta, 1)$-Reifenberg flat with $0<\delta<\frac{1}{8}$,
Suppose further that there exists an $\epsilon>0$ such that
$$\omega(\mathfrak{C} \cap B_1(y) )< \epsilon  \omega(B_{1}(y)) \textnormal{ for all } y \in \Omega $$
and for all $y \in \Omega$ and for all $r \in (0,1)$ it holds
$$ B_r(y) \cap \Omega \subset \mathfrak{D} \textnormal{ if } \omega(\mathfrak{C} \cap B_r(y)) \geq \epsilon \omega(B_r(y)).$$
Then,
$$\omega(\mathfrak{C}) \leq c^* \epsilon \omega(\mathfrak{D}),$$
where $c^*=c^*(n, s, [\omega]_s)$.
\end{lem}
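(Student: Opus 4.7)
The plan is to run a Calderón--Zygmund / Vitali stopping-time decomposition adapted to the weighted setting, combining the $A_s$-strong-doubling property of $\omega$ (Lemma~\ref{wmrel}) with the measure-theoretic fatness of $\Omega$ supplied by Reifenberg flatness.

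First, since $\omega\,dx$ is doubling on $\R^n$, the Lebesgue density theorem gives that $\omega$-a.e.\ point of $\mathfrak{C}$ is a density point, and I restrict attention to such $y$. For each density point $y$, define
$$\rho_y=\inf\Bigl\{r\in(0,1]\,:\,\omega(\mathfrak{C}\cap B_r(y))<\epsilon\,\omega(B_r(y))\Bigr\}.$$
The density property at $y$ forces $\rho_y>0$, while hypothesis~(i) at $r=1$, combined with continuity of the ratio $r\mapsto\omega(\mathfrak{C}\cap B_r(y))/\omega(B_r(y))$, shows that the set contains a neighbourhood of $1$ in $(0,1]$, hence $\rho_y<1$. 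By that same continuity,
$$\omega(\mathfrak{C}\cap B_{\rho_y}(y))=\epsilon\,\omega(B_{\rho_y}(y)),\qquad \omega(\mathfrak{C}\cap B_r(y))\geq\epsilon\,\omega(B_r(y))\;\text{for }r\in(0,\rho_y].$$
Hypothesis~(ii) at $r=\rho_y\in(0,1)$ then yields $B_{\rho_y}(y)\cap\Omega\subset\mathfrak{D}$.

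Second, set $r_y:=\rho_y/5\in(0,1/5)$ and apply the standard $5r$-Vitali selection to $\{B_{r_y}(y)\}$ (radii uniformly bounded). This extracts a countable pairwise disjoint subfamily $\{B_i\}=\{B_{r_{y_i}}(y_i)\}$ whose $5$-fold enlargements $\{5B_i\}=\{B_{\rho_{y_i}}(y_i)\}$ cover the density-point subset of $\mathfrak{C}$ up to an $\omega$-null set. The defining equality at each parent radius together with the strong doubling property give
$$\omega(\mathfrak{C})\leq\sum_i\omega(\mathfrak{C}\cap B_{\rho_{y_i}}(y_i))=\epsilon\sum_i\omega(B_{\rho_{y_i}}(y_i))\leq C_1\,\epsilon\sum_i\omega(B_i),$$
with $C_1=C_1(n,s,[\omega]_s)$.

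Third, convert the ball-measure bound into a $\mathfrak{D}$-measure bound. For every $y\in\Omega$ and $r\in(0,1)$, Reifenberg flatness with $\delta<1/8$ gives a universal Lebesgue lower bound $|B_r(y)\cap\Omega|\geq c_n|B_r(y)|$: if $y$ lies deep inside, $B_r(y)\subset\Omega$, while near the boundary, $\delta<1/8$ forces $B_r(y)$ to contain a definite fraction of $\Omega$ through the plane $L(x,r)$ of Definition~\ref{Reif}. Lemma~\ref{wmrel} upgrades this to $\omega(B_i\cap\Omega)\geq c_1\,\omega(B_i)$ with $c_1=c_1(n,s,[\omega]_s)$. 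Since $B_i\subset B_{\rho_{y_i}}(y_i)$, the inclusion $B_{\rho_{y_i}}(y_i)\cap\Omega\subset\mathfrak{D}$ descends to $B_i\cap\Omega\subset\mathfrak{D}\cap B_i$, and disjointness of $\{B_i\}$ now closes the estimate:
$$\omega(\mathfrak{C})\leq C_1\,\epsilon\sum_i\omega(B_i)\leq\frac{C_1}{c_1}\,\epsilon\sum_i\omega(\mathfrak{D}\cap B_i)\leq c^*\,\epsilon\,\omega(\mathfrak{D}),$$
with $c^*=C_1/c_1$ depending only on $n,s,[\omega]_s$.

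The main obstacle is the geometric step $|B_r(y)\cap\Omega|\geq c_n|B_r(y)|$ for $y\in\Omega$ and $r\in(0,1)$: this is the one input that is not purely measure-theoretic, and it uses the approximating plane of $\partial\Omega$ together with $\delta<1/8$ to prevent $B_r(y)$ from lying almost entirely on the wrong side of $L(x,r)$. The rest of the argument is a formal weaving of stopping time, Vitali covering, and the weighted doubling supplied by Lemma~\ref{wmrel}; the careful normalization $r_y=\rho_y/5$ avoids any bookkeeping around radii close to $1$ since all Vitali balls then have radius strictly below $1/5$ and their parents strictly below~$1$.
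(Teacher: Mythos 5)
Your proof is correct and follows essentially the same route as the source the paper cites (Mengesha--Phuc \cite{Meng}, Lemma~3.8), namely the modified Vitali covering argument of Caffarelli--Peral/Wang: a stopping-time radius $\rho_y$ at which the weighted density of $\mathfrak{C}$ equals $\epsilon$ (using $A_s$-doubling and the Lebesgue density theorem), a $5r$-Vitali selection, the Lebesgue measure-density lower bound $|B_r(y)\cap\Omega|\geq c_n|B_r(y)|$ valid on $(\delta,1)$-Reifenberg flat domains with $\delta<1/8$, and the doubling comparison of Lemma~\ref{wmrel} to pass between $\omega(B_i)$ and $\omega(B_i\cap\Omega)\leq\omega(\mathfrak{D}\cap B_i)$. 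The paper itself does not reproduce the argument but refers directly to \cite{Meng}; your reconstruction is a faithful rendering of that proof.
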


Another tool that makes our argument clean is scaling and
normalization. Consider the following scaled and normalized setting:
for $0<\rho<1$ and $\lambda>1$,
\begin{equation*}
\tilde{A}(x)=A(\rho x),\ \tilde{u}(x)=\frac{u(\rho
x)}{\lambda \rho},\ \tilde{p}(x)=\frac{p(\rho x)}{\lambda},\
\tilde{\F}(x)=\frac{\F(\rho x)}{\lambda},\ \textrm{and }
\tilde{\Omega}=\frac1\rho \Omega.
\end{equation*}

Then the following lemma holds.

\begin{lem}
\label{scaling}
\begin{enumerate}
\item If $(u ,p)$ is a weak solution pair to (\ref{stp}), then $(\tilde{u}, \tilde{p})$ is a weak solution pair to
\begin{equation*}
\left\{\begin{array}{rlcc}
 \textnormal{div } ( \tilde{A}(x) D\tilde{u})-\nabla \tilde{p} &= \textnormal{div } \tilde{\F}  &\textnormal{ in } & \tilde{\Omega}\\
\textnormal{div } \tilde{u}&=0 &\textnormal{ in } & \tilde{\Omega}\\
                           \tilde{u}&=0 & \textnormal{ on } & \partial\tilde{\Omega}.
       \end{array}\right.
\end{equation*}

\item If $A$ satisfies the assumptions (\ref{ell}) and (\ref{sb}), then so does $\tilde{A}$ with the same constants $\nu$ and $L$.

\item If $A$ is $(\delta, R)$-vanishing in $\Omega$, then $\tilde{A}$ is $\left(\delta, \frac{R}{\rho} \right)$-vanishing in $\tilde{\Omega}$.

\item If $\Omega$ is $(\delta, R)$-Reifenberg flat, then $\tilde{\Omega}$ is  $\left(\delta, \frac{R}{\rho} \right)$-Reifenberg flat.

\end{enumerate}
\end{lem}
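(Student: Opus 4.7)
The plan is to verify each of the four parts by a direct change of variables $y = \rho x$, combined with the chain rule for the renormalizations by $\lambda$. None of the assertions is deep, but care is needed to keep track of how each geometric object, function, and integral transforms.

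For part (1), I would begin by fixing a test function $\phi \in W^{1,2}_{0}(\tilde{\Omega})^{n}$ and constructing the corresponding test function $\psi(y) = \phi(y/\rho)$ on $\Omega$, which satisfies $\nabla_{y}\psi(y) = \rho^{-1}\nabla \phi(y/\rho)$ and $\operatorname{div}_{y}\psi(y) = \rho^{-1}\operatorname{div}\phi(y/\rho)$. Plugging $\psi$ into the weak formulation for $(u,p)$ on $\Omega$, performing the substitution $y = \rho x$ (so $dy = \rho^{n}dx$), and using $\nabla u(\rho x) = \lambda \nabla \tilde{u}(x)$ (which follows from $\tilde{u}(x) = u(\rho x)/(\lambda \rho)$), all the explicit powers of $\rho$ and $\lambda$ collect into a single overall factor of $\lambda \rho^{n-1}$ that cancels. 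The divergence-free condition $\operatorname{div}\tilde{u} = 0$ follows similarly from $\partial_{i}\tilde{u}^{j}(x) = \lambda^{-1}(\partial_{i}u^{j})(\rho x)$. Part (2) for the ellipticity and boundedness bounds is immediate, since \eqref{ell} is a pointwise condition and $\tilde{A}(x) = A(\rho x)$ simply evaluates $A$ at another point of $\R^{n}$.

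For part (3), given any $B_{r}(y)$ with $0 < r \leq R/\rho$, the substitution $z = \rho x$ sends $B_{r}(y)$ to $B_{\rho r}(\rho y)$ and preserves the normalized integral, so
\[
\mint_{B_{r}(y)}\bigl|\tilde{A}(x) - \overline{\tilde{A}}_{B_{r}(y)}\bigr|^{2}\,dx = \mint_{B_{\rho r}(\rho y)}\bigl|A(z) - \overline{A}_{B_{\rho r}(\rho y)}\bigr|^{2}\,dz,
\]
which is bounded by $\delta^{2}$ because $\rho r \leq R$. For part (4), I would verify that if $L(\rho x, \rho r)$ is the Reifenberg plane provided at scale $\rho r$ about $\rho x \in \partial \Omega$, then $\tilde{L}(x,r) := \rho^{-1}L(\rho x, \rho r)$ works at scale $r$ about $x \in \partial \tilde{\Omega}$; the Hausdorff distance $D$ is $1$-homogeneous under dilation, so the dilation factor $1/\rho$ combines with $1/r$ to give exactly the inverse of $\rho r$, recovering the same inequality $\delta$.

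None of the steps presents a genuine obstacle; the only place where a reader might slip is in part (1), tracking the four separate $\rho$-powers (one from $dy = \rho^{n}dx$, one from each of the two gradients, and one from the renormalization of $\tilde{u}$) together with the single $\lambda$ from each of $\tilde{u}$, $\tilde{p}$, and $\tilde{\F}$, and checking that they cancel consistently across all three terms of the weak equation. I would therefore write out the computation for part (1) in full, while collecting parts (2)--(4) into short remarks that note the relevant scaling identity in each case.
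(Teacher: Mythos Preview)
Your proposal is correct and follows exactly the approach the paper indicates: the paper's proof consists solely of the sentence ``The proof follows from a direct computation,'' and you have supplied precisely that computation. Your tracking of the $\rho$- and $\lambda$-powers in part (1) is accurate, and the scaling identities you note for parts (2)--(4) are the right ones.
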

\begin{proof}
 The proof follows from a direct computation.
\end{proof}
\vspace{0.2cm}

\subsection{\bf Existence and energy estimates of weak solution pairs}~\\

It is well known that if $\Omega$ is a bounded Lipschitz domain,
then the following Stokes system
\begin{equation*}
\left\{\begin{array}{rlcc}
 -\Delta u +\nabla p &= \textnormal{div } \F  &\textrm{ in } & \Omega\\
\textnormal{div } u&=0 &\textrm{ in } & \Omega\\
                           u&=0 & \textrm{ on } & \partial \Omega,
       \end{array}\right.
\end{equation*}
has a unique weak solution pair $(u,p)$ with the energy estimate
$$\| \nabla u \|_{L^2(\Omega)^{n^2}}+\|p\|_{L^2(\Omega)} \leq c \|
\F \|_{L^2(\Omega)^{n^2}},
$$
for some positive constant $c=c(\Omega, n)$, see \cite{Galdi, Sohr}.

Now we return to the generalized Stokes system (\ref{stp}). Here we
claim that the problem (\ref{stp}) also has a unique weak solution
pair with the standard estimate for our case that the underlying
domain is $(\delta, R)$-Reifenberg flat. It is well known that
$(\delta, R)$-Reifenberg flat domain is non-tangentially accessible for sufficiently small $\delta>0$,
it is a John domain as follows from \cite{Aika, Kenig}. Roughly
speaking, a domain is a John domain if it is possible to travel from
one point to another without going too close to the boundary.

We need the following two lemmas regarding a John domain.

\begin{lem}
\cite{Acos} \label{lemdiv} Let $\Omega \subset \R^n$, $n \geq 2$, be a
bounded John domain. Given $f \in L^2(\Omega)$ such that
$\int_\Omega f \, dx =0$, there exists at least one $v \in W^{1,
2}_0(\Omega)^n$ satisfying
\begin{align*}
\textnormal{div } v &= f \; \textnormal{in}\: \Omega,\\
\|\nabla v\|_{L^2(\Omega)^n} &\leq C \|f\|_{L^2(\Omega)},
\end{align*}
where $c=c(\Omega, n)$.
\end{lem}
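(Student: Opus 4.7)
The plan is to reduce to the case of a ball via Bogovskii's explicit formula and then patch local solutions together using a Whitney-type decomposition adapted to the John structure of $\Omega$.

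\emph{Step 1 (ball case).} On a ball $B$, for $g \in L^2(B)$ with $\int_B g\,dx = 0$, Bogovskii's integral formula (pick $\omega \in C^\infty_c(B)$ with $\int \omega = 1$ and integrate along rays) produces $w \in W^{1,2}_0(B)^n$ with $\mathrm{div}\, w = g$; its gradient is given by a singular integral with a standard Calder\'on--Zygmund kernel, so $\|\nabla w\|_{L^2(B)} \leq c\|g\|_{L^2(B)}$ by the classical $L^2$ theory. By scaling this holds on any ball with constant independent of the radius, and by an easy covering argument on any bounded domain star-shaped with respect to a ball (with constant depending only on the chunkiness ratio).

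\emph{Step 2 (John decomposition).} I would fix a Whitney decomposition $\{Q_j\}$ of $\Omega$ with $\mathrm{diam}(Q_j) \sim \mathrm{dist}(Q_j,\partial\Omega)$ and a central Whitney cube $Q_0$ near the John center. The John condition gives, for every $j$, a chain $Q_0 = P^j_0, P^j_1, \ldots, P^j_{N_j} = Q_j$ of Whitney cubes with adjacent cubes of comparable diameter, bounded overlap of their slight dilates $\lambda P^j_k$, and controlled total length. Splitting $f = \sum_j f\chi_{Q_j}$ and setting $c_j = \int_{Q_j} f\,dx$ (so $\sum_j c_j = 0$), I would redistribute the masses $c_j$ along the chains toward $Q_0$ by choosing transfer densities supported in the overlaps $P^j_{k-1} \cap P^j_k$. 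This produces a refined decomposition $f = \sum_j g_j$ in which each $g_j$ is supported in a single dilate $\lambda Q_j$, has zero mean, and satisfies
$$
\sum_j \|g_j\|_{L^2(\lambda Q_j)}^2 \leq c\|f\|_{L^2(\Omega)}^2.
$$

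\emph{Step 3 (assemble).} Using Step 1 on each star-shaped cube $\lambda Q_j$, solve $\mathrm{div}\, v_j = g_j$ with $v_j \in W^{1,2}_0(\lambda Q_j)^n$ and $\|\nabla v_j\|_{L^2(\lambda Q_j)} \leq c\|g_j\|_{L^2(\lambda Q_j)}$, extend by zero, and set $v = \sum_j v_j$. Bounded overlap of $\{\lambda Q_j\}$ then yields
$$
\|\nabla v\|_{L^2(\Omega)}^2 \leq c \sum_j \|\nabla v_j\|_{L^2(\lambda Q_j)}^2 \leq c\sum_j \|g_j\|_{L^2(\lambda Q_j)}^2 \leq c\|f\|_{L^2(\Omega)}^2,
$$
while $\mathrm{div}\, v = \sum_j g_j = f$ in $\Omega$. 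The main obstacle is the quantitative $L^2$ estimate in Step 2: a fixed Whitney cube $P$ can lie on the chains for many $Q_j$'s, so the total transferred mass through $P$ is a sum $\sum_{j : P \in \mathrm{chain}(j)} c_j$, and controlling this in $L^2$ reduces to a Hardy-type inequality on the tree of Whitney cubes with the chains as branches — this is precisely where the John parameter of $\Omega$ enters the constant $c(\Omega, n)$.
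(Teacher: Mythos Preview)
The paper does not prove this lemma; it is quoted without proof from \cite{Acos} (Acosta--Dur\'an--Muschietti). Your sketch is precisely the strategy of that reference---Bogovskii on star-shaped pieces, a Whitney decomposition with John chains to redistribute the means toward the central cube, and a Hardy-type inequality on the resulting tree to control the $L^2$ norm of the transfers---so your proposal is correct and aligns with the cited source.
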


\begin{lem}
\cite{Acos, Galdi} \label{lemp} Let $\Omega \in \R^n$, $n \geq 2$,
be a John domain. Then any bounded linear functional $\mathscr{F}$
on $W^{1, 2}_0(\Omega)^n$ identically vanishing on $W^{1, 2}_{0,
\sigma}(\Omega)^n$ is of the form $\mathscr{F}(v)=\int_\Omega p
\textnormal { div } v \, dx$ for some uniquely determined $p \in
\hat L^2(\Omega) := L^2(\Omega)/\R$.
\end{lem}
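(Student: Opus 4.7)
The plan is to prove this by an application of the Riesz representation theorem on the closed subspace $L^2_0(\Omega) := \{f \in L^2(\Omega) : \int_\Omega f \, dx = 0\}$, using Lemma \ref{lemdiv} to construct a bounded right inverse for the divergence operator. This is essentially the argument that reconstructs pressure from the momentum equation in the classical Stokes theory, and the John-domain hypothesis enters precisely through that right inverse.

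First I would define a candidate linear functional $\Phi$ on $L^2_0(\Omega)$ as follows. Given $f \in L^2_0(\Omega)$, Lemma \ref{lemdiv} produces some $v_f \in W^{1,2}_0(\Omega)^n$ with $\textrm{div } v_f = f$ and $\|\nabla v_f\|_{L^2(\Omega)^{n^2}} \leq C \|f\|_{L^2(\Omega)}$; set $\Phi(f) := \mathscr{F}(v_f)$. Well-definedness is immediate from the hypothesis: if $v, v' \in W^{1,2}_0(\Omega)^n$ both solve $\textrm{div } w = f$, then $v - v' \in W^{1,2}_{0,\sigma}(\Omega)^n$, hence $\mathscr{F}(v) = \mathscr{F}(v')$. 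Combining with the estimate of Lemma \ref{lemdiv} gives $|\Phi(f)| \leq C \|\mathscr{F}\|_{(W^{1,2}_0)^*} \|f\|_{L^2(\Omega)}$, so $\Phi$ is a bounded linear functional on the Hilbert space $L^2_0(\Omega)$.

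Next I would apply the Riesz representation theorem to obtain a unique $p \in L^2_0(\Omega)$ with $\Phi(f) = \int_\Omega p \, f \, dx$ for every $f \in L^2_0(\Omega)$. For any $v \in W^{1,2}_0(\Omega)^n$ one has $\textrm{div } v \in L^2_0(\Omega)$ by integration by parts against the constant function $1$, and $v$ itself is a legitimate choice of $v_{\textrm{div } v}$ in the construction of $\Phi$; thus $\mathscr{F}(v) = \Phi(\textrm{div } v) = \int_\Omega p \, \textrm{div } v \, dx$, which is the claimed representation once we identify $p$ with its class in $\hat L^2(\Omega) = L^2(\Omega)/\R$. Uniqueness in $\hat L^2(\Omega)$ follows by the same surjectivity: if $p_1 - p_2$ pairs to zero with every $\textrm{div } v$, then it is orthogonal to all of $L^2_0(\Omega)$ and so is constant almost everywhere.

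The only nontrivial ingredient is the bounded solvability of $\textrm{div } v = f$ on $W^{1,2}_0(\Omega)^n$ for mean-zero $f$, which is exactly Lemma \ref{lemdiv} and which is where the John-domain geometry is essential; on more irregular domains the Bogovski\u\i{} construction can fail and with it this representation. Everything else in the proof is formal Hilbert-space manipulation, so the main obstacle is conceptual rather than technical: recognizing that the vanishing condition on $\mathscr{F}$ is precisely what makes $\Phi$ descend from $W^{1,2}_0(\Omega)^n$ to $L^2_0(\Omega)$ along the divergence map.
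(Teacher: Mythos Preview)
Your argument is correct and is precisely the standard proof of this representation result; the paper itself does not supply a proof but merely cites it from \cite{Acos, Galdi}, where essentially the same Riesz-representation argument via a bounded right inverse of the divergence (Lemma~\ref{lemdiv}) is carried out. One minor point worth making explicit is the linearity of $\Phi$: since Lemma~\ref{lemdiv} does not assert a linear choice of $v_f$, you should observe that the well-definedness step already shows $\Phi(f)=\mathscr{F}(v)$ for \emph{any} $v\in W^{1,2}_0(\Omega)^n$ with $\textrm{div }v=f$, from which linearity is immediate.
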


We now prove the existence and energy estimate of a weak solution
pair to (\ref{stp}).

\begin{lem}
\label{divlem} Let $\Omega \subset \R^n$, $n \geq 2$ be an open
bounded $(\delta, R)$-Reifenberg flat domain  with sufficiently small $\delta>0$ and let $\F \in
L^2(\Omega)^{n^2}$. Then there exists a unique solution pair $(u, p)
\in W^{1, 2}_{0, \sigma}(\Omega)^n \times L^2(\Omega)$ to
(\ref{stp}) satisfying $\int_\Omega p \, dx=0$ and the
standard estimate
\begin{equation}
\label{l2esti} \| \nabla u
\|_{L^2(\Omega)^{n^2}}+\|p\|_{L^2(\Omega)} \leq C \| \F
\|_{L^2(\Omega)^{n^2}},
\end{equation}
where $c=c( \Omega, n, \nu, L)$. 

In addition, if $u \in W^{1, q}_{0, \sigma}(\Omega)^n$ and $\F \in L^{q}(\Omega)^{n^2}$ for $2 \leq q<\infty$, then
\begin{equation}
\label{pesti}
\|p \|_{L^{q}(\Omega)} \leq C \left (\| \F \|_{L^{q}(\Omega)^{n^2}} + \|\nabla u \|_{L^q(\Omega)^{n^2}} \right ). 
\end{equation}
\end{lem}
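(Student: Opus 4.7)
The overall plan is to build $u$ by Lax--Milgram on the solenoidal subspace, recover $p$ as a Riesz representer via Lemma \ref{lemp}, bound its $L^2$ norm by the Bogovski\u{\i}-type ingredient of Lemma \ref{lemdiv}, and then promote the $L^2$ bound to an $L^q$ bound by a duality argument against the divergence equation.

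First, the bilinear form $B(u,\phi)=\int_\Omega A(x)\nabla u:\nabla\phi\,dx$ on $W^{1,2}_{0,\sigma}(\Omega)^n$ is continuous by the $L^\infty$-bound on $A$ and coercive by the ellipticity (\ref{ell}) combined with the Poincar\'e inequality on the bounded John domain $\Omega$, while $\phi\mapsto\int_\Omega\F:\nabla\phi\,dx$ is a bounded linear functional. Lax--Milgram thus delivers a unique solenoidal $u$ solving (\ref{st}), and testing with $\phi=u$ yields $\|\nabla u\|_{L^2}\leq\nu^{-1}\|\F\|_{L^2}$. The functional
\[ \mathscr{F}(v)\;:=\;\int_\Omega A(x)\nabla u:\nabla v\,dx-\int_\Omega\F:\nabla v\,dx,\qquad v\in W^{1,2}_0(\Omega)^n, \]
is then bounded and, by the equation for $u$, vanishes on $W^{1,2}_{0,\sigma}(\Omega)^n$; since $\Omega$ is John for small $\delta$, Lemma \ref{lemp} produces a unique $p\in\hat L^2(\Omega)$ with $\mathscr{F}(v)=\int_\Omega p\,\textrm{div}\,v\,dx$, and we pick the mean-zero representative so that $\int_\Omega p\,dx=0$.

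For the $L^2$ bound on $p$, apply Lemma \ref{lemdiv} with $f=p$ to obtain $v\in W^{1,2}_0(\Omega)^n$ with $\textrm{div}\,v=p$ and $\|\nabla v\|_{L^2}\leq C\|p\|_{L^2}$; inserting this $v$ into $\mathscr{F}$ and using H\"older gives $\|p\|_{L^2}^2=\mathscr{F}(v)\leq C(\|\nabla u\|_{L^2}+\|\F\|_{L^2})\|p\|_{L^2}$, which combined with the preceding $u$-estimate proves (\ref{l2esti}). For (\ref{pesti}), assume $\du\in L^q$ and $\F\in L^q$ with $q\geq 2$. For each $g\in C_c^\infty(\Omega)$ with $\int_\Omega g\,dx=0$, the $L^{q'}$ analogue of Lemma \ref{lemdiv} on John domains (in the spirit of Acosta--Dur\'an--Muschietti) yields $v\in W^{1,q'}_0(\Omega)^n\cap W^{1,2}_0(\Omega)^n$ with $\textrm{div}\,v=g$ and $\|\nabla v\|_{L^{q'}}\leq C\|g\|_{L^{q'}}$; using that $v\in W^{1,2}_0(\Omega)^n$,
\[ \Big|\int_\Omega p\,g\,dx\Big|=|\mathscr{F}(v)|\leq C\bigl(\|\du\|_{L^q}+\|\F\|_{L^q}\bigr)\|g\|_{L^{q'}}, \]
and density of mean-zero smooth functions in the mean-zero subspace of $L^{q'}(\Omega)$ together with $\int_\Omega p\,dx=0$ delivers (\ref{pesti}) by duality.

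The only nontrivial ingredient is the $L^{q'}$ solvability of $\textrm{div}\,v=g$ with the natural Bogovski\u{\i}-type bound on the nonsmooth domain $\Omega$, since Lemma \ref{lemdiv} as stated only covers $q'=2$. The extension to general $q'\in(1,\infty)$ is by now classical on John domains, and the Reifenberg hypothesis with small $\delta$ places $\Omega$ in that category. Everything else reduces to Lax--Milgram, the Riesz representation supplied by Lemma \ref{lemp}, and routine H\"older estimates.
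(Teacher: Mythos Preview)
Your proof is correct and follows essentially the same route as the paper: Lax--Milgram for $u$, recovery of $p$ via the functional $\mathscr{F}$ and Lemma~\ref{lemp}, and a Bogovski\u{\i}-type divergence solvability on the John domain for the pressure bound. The only cosmetic difference is that the paper obtains (\ref{pesti}) by testing against a single $v$ with $\textrm{div}\,v=|p|^{q-2}p-\mint_\Omega |p|^{q-2}p\,dx$, whereas you argue by duality over all mean-zero $g$; both variants rest on the same $L^{q'}$ extension of Lemma~\ref{lemdiv}, which you correctly flag as the one ingredient beyond the stated $L^2$ case.
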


\begin{proof}
It is clear that $u \in W^{1,2}_{0, \sigma}(\Omega)$ is uniquely
determined by applying Lax-Milgram theorem to (\ref{st}). Using $u
\in W^{1,2}_{0, \sigma}(\Omega)$ as the test function to (\ref{st}),
we have
\begin{equation*}
 \nu\|\nabla u\|^2_{L^2(\Omega)^{n^2}} \leq \int_\Omega A(x)\nabla u : \nabla u dx = \int_\Omega \F : \nabla u dx \leq \| \F\|_{L^2(\Omega)^{n^2}} \|\nabla u
 \|_{L^2(\Omega)^{n^2}}
\end{equation*}
and so we have
\begin{equation}
\label{uesti1} \| \nabla u \|_{L^2(\Omega)^{n^2}} \leq \frac 1\nu \|
\F\|_{L^2(\Omega)^{n^2}}.
\end{equation}

We next assume  $u \in W^{1, q}(\Omega)^n$, $2 \leq q<\infty$ consider the functional
\begin{equation}
\label{ftl}\mathscr{F}(v)  \triangleq \int_\Omega A(x)\du : \nabla v
- \F : \nabla v \ dx, \ \ \ \ \  v \in W^{1, q'}_0(\Omega)^n,
\end{equation} 
where $q'$ is H\"{o}lder conjugate.
Note that if $v \in W^{1,q}_{0, \sigma}(\Omega)$, then
$\mathscr{F}(v)=0$. We also observe from (\ref{uesti1}) that
\begin{eqnarray*}
\left| \mathscr{F}(v) \right| &\leq& c \|\du \|_{L^q(\Omega)^{n^2}}
\|\nabla v\|_{L^{q'}(\Omega)^{n^2}}+\|\F \|_{L^q(\Omega)^{n^2}}
\|\nabla v\|_{L^{q'}(\Omega)^{n^2}} \\
&\leq& c \|\F \|_{L^q(\Omega)^{n^2}} \|\nabla
v\|_{L^{q'}(\Omega)^{n^2}}
\end{eqnarray*}
for some positive $c=c(n, \nu, L)$. Consequently, $\mathscr{F}$ is a
bounded linear functional $\mathscr{F}$ on $W^{1, q'}_0(\Omega)^n$
identically vanishing on $W^{1, 2}_{0, \sigma}(\Omega)^n$. Then
since $(\delta, R)$-Reifenbefg flat domain with small $\delta$ is a John domain,  we
apply Lemma \ref{lemp} to discover that one can find a uniquely
determined $p \in L^q(\Omega)$ with $\int_\Omega p\, dx=0$ such that
\begin{equation}
\label{func1} \mathscr{F}(v)=\int_\Omega p \textrm{ div } v \ dx
\end{equation}
for all $v \in W^{1, q'}_0(\Omega)$. Then we conclude that we have a
unique solution pair $(u, p) \in W^{1,2}_{0, \sigma}(\Omega)^n
\times L^2(\Omega)$ satisfying $\int_\Omega p\, dx=0$, if $\F \in L^2(\Omega)^{n^2}$.

To prove \eqref{pesti}, we consider the problem
\begin{equation}
\label{pb1}
\begin{aligned}
&\textrm {div } v=|p|^{q-2}p-\mint_\Omega |p|^{q-2}p \,dx :=g \\
&v\in W^{1,q'}_0(\Omega)\\
&\|v\|_{W^{1, q'}(\Omega)} \leq C \|p\|^{q-1}_q.
\end{aligned}
\end{equation}

Since $\int_\Omega g \, dx=0$, $g \in L^{q'}(\Omega)$, $\|g\|_{q'} \leq c\|p \|_q^{q-1}$, from Lemma \ref{lemdiv} we deduce the existence of $v$ solving (\ref{pb1}). If we replace such a $v$ into (\ref{func1}) and use the assumption, $\int_\Omega p \, dx=0$, together with the H\"{o}lder inequality, we see
\begin{equation*}
\begin{aligned}
\|p\|^q_q &=\int_\Omega |p|^q \,dx =\int_\Omega p\left(|p|^{q-2}p-\mint_\Omega |p|^{q-2}p \, dy \right) dx \\
		&=\int_\Omega p \textrm{ div } v \, dx = \mathscr{F}(v)\\
		&=\int_\Omega A(x)\du : \nabla v + \F: \nabla v \, dx\\
		&\leq C \| \du \|_q \| \nabla v \|_{q'}+\|\F\|_q\|\nabla v \|_{q'}\\
		&\leq C \left(\| \du \|_q +\| \F \|_q \right ) \|p \|^{q-1}_{q}\\
		&\leq C \left(\| \du \|_q +\| \F \|_q \right )^q + \frac12 \|p \|^{q}_{q}\\
\end{aligned}
\end{equation*}
This inequality yields \eqref{pesti}. And \eqref{l2esti} follows from \eqref{uesti1} and \eqref{pesti}.

\end{proof}

A main point in this paper is that the nonhomogeneous term $\F$
belongs to a weighted Lebesgue space. More precisely,
$$ |\F|^2 \in L^{\frac q2}_\omega(\Omega), \ \omega \in A_{\frac q2} \textrm{ for } q \in (2, \infty),$$
which means
$$ \F \in L^{q}_\omega(\Omega)^{n^2}, \ \omega \in A_{\frac q2} \subset A_q \textrm{ for } q \in (2, \infty).$$
Using H\"{o}lder inequality and (\ref{aq}), we compute
\begin{eqnarray*}
\|\F\|^2_{L^2(\Omega)^{n^2}} &=& \int_\Omega |\F|^2\omega^{\frac 2q}\omega^{-\frac 2q}dx\\
&\leq& \left(\int_\Omega \left(|\F|^2\right)^{\frac q2}\omega \,dx\right)^{\frac2q}\left(\int_\Omega \omega^{\frac {-2}{q-2}} dx\right)^{\frac {q-2}{q}}\\
&=& \left \| |\F|^2 \right\|_{L^{\frac q2}_\omega(\Omega)^{n^2}} |\Omega|^{\frac {q-2}{q}} \left(\frac{1}{|\Omega|} \int_\Omega \omega^{\frac {-2}{q-2}} dx\right)^{\frac {q-2}{q}}\\
&\leq& \left \| |\F|^2 \right\|_{L^{\frac q2}_\omega(\Omega)^{n^2}}
|\Omega|^{-\frac {2}{q}}\omega(\Omega)[\omega]^{\frac 2q}_{\frac
q2},
\end{eqnarray*}
which implies $\F \in L^2(\Omega)^{n^2}$. This guarantees the
existence of a unique weak solution pair $(u, p)$ to (\ref{stp}).

\section{\bf Gradient estimates in $L^q_\omega$}
Throughout this section we write $c$ to mean any universal constant
that can be explicitly computed in terms of known quantities such as
$\nu, L, n, q, \omega$ and the structure of $\Omega$. Thus the exact
value may vary from line to line. If necessary, we specify it by
$c_1, \ c_2, \cdots$.

We first make interior comparison estimates. For doing this, we
consider
\begin{equation}
\label{lost} \textrm{div } \left( A \du \right)-\nabla p=\textrm{div
} \F, \ \textrm{div } u=0  \textrm { in } \Omega \supset B_6.
\end{equation}
Suppose that
\begin{equation}
\label{intdu} \mint_{B_{5}}|\du|^2+|p|^2 dx \leq 1.
\end{equation}
As usual, a weak solution to (\ref{lost}) is a function $u \in
W^{1,2}_{\sigma}(\Omega)^n$ such that
$$ \int_\Omega A\du: \nabla \phi \, dx=\int_\Omega \F: \nabla \phi\, dx$$
for all $\phi \in W^{1, 2}_{0, \sigma}(\Omega)^n$, and $(u, p)$ is a
weak solution pair if and only if $u \in W^{1,
2}_{\sigma}(\Omega)^n$ and $p \in L^2_{loc}(\Omega)$ satisfy
\begin{equation}
\label{locstp}
 \int_\Omega A\du: \nabla \phi - p\textrm{ div }\phi \, dx=\int_\Omega \F: \nabla \phi\, dx
\end{equation}
for all $\phi \in W^{1, 2}_{0}(\Omega)^n$.

We want to find local estimates of a weak solution pair to
(\ref{locstp}) in comparison with the homogeneous problem
\begin{equation}
\label{locstp2} \left\{\begin{array}{rlcc}
\textrm {div } A\nabla h -\nabla p_h &=0 &\textrm{ in } &B_{4} \\
\textrm {div } h&=0  &\textrm{ in } &B_{4} \\
h&=u  &\textrm{ on } &\partial B_{4},
\end{array}\right.
\end{equation}
and the limiting problem
\begin{equation}
\label{locstp1} \left\{\begin{array}{rlcc}
\textrm {div } \bar{A}_{B_{4}}\nabla v -\nabla p_v &=0 &\textrm{ in } &B_{3} \\
\textrm {div } v &=0  &\textrm{ in } &B_{3} \\
v &=h  &\textrm{ on } &\partial B_{3}.
\end{array}\right.
\end{equation}
 Taking the test function $h-u$ for (\ref{locstp2}) and $v-w$ for (\ref{locstp1}), respectively, and using (\ref{ell}) and (\ref{intdu}), we have
\begin{equation}
\label{inten} \mint_{B_4} |\nabla h|^2 dx \leq c \mint_{B_4} |\nabla
u|^2 dx \leq c \textrm{ and } \mint_{B_3} |\nabla v|^2 dx \leq c
\mint_{B_3} |\nabla h|^2 dx \leq c.
\end{equation}

In what follows we need the following regularity results for $v$ and
$h$.

\begin{lem}
\label{higher} Let $h \in W^{1,2}_\sigma(B_{4})^n$ be the weak
solution to (\ref{locstp2}) satisfying (\ref{intdu}). Then there
exists an exponent $r_1=r_1(\nu, L, n)>2$ such that
 $\|\nabla h \|_{L^r(B_{3})} \leq c$.
\end{lem}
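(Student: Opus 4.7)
The plan is to establish the higher integrability of $\nabla h$ via a reverse H\"older inequality combined with Gehring's lemma, applied in the standard interior setting. Since the PDE is homogeneous and we have the a priori energy bound $\mint_{B_4}|\nabla h|^2\,dx \le c$ from (\ref{inten}), only an interior self-improvement on balls $B_{2\rho}(x_0)\subset B_4$ is required.

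The first step is a Caccioppoli-type inequality for $h$ on arbitrary balls $B_{2\rho}(x_0)\Subset B_4$. The technical issue, characteristic of the Stokes system, is that admissible test functions in (\ref{locstp2}) must be divergence-free, so one cannot simply test with $\eta^2(h-\bar h_{B_{2\rho}})$. I would correct this by the Bogovski\u{\i} construction: let $\eta\in C^\infty_c(B_{2\rho}(x_0))$ be a standard cutoff with $\eta\equiv 1$ on $B_\rho(x_0)$ and $|\nabla\eta|\le c/\rho$, and set $\psi := \eta^2(h-\bar h_{B_{2\rho}})$; since $\mathrm{div}\,h=0$, we have $\mathrm{div}\,\psi = 2\eta\,\nabla\eta\cdot(h-\bar h_{B_{2\rho}})$, which has zero mean on $B_{2\rho}(x_0)$. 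By Lemma \ref{lemdiv} applied on the ball, there exists $w\in W^{1,2}_0(B_{2\rho}(x_0))^n$ with $\mathrm{div}\,w=\mathrm{div}\,\psi$ and
\begin{equation*}
\|\nabla w\|_{L^2(B_{2\rho}(x_0))}\le c\,\|\mathrm{div}\,\psi\|_{L^2(B_{2\rho}(x_0))}\le \frac{c}{\rho}\|h-\bar h_{B_{2\rho}}\|_{L^2(B_{2\rho}(x_0))}.
\end{equation*}
Then $\phi:=\psi-w$ is divergence-free and admissible in (\ref{locstp2}); plugging $\phi$ in, the pressure term disappears, and using ellipticity \eqref{ell} together with absorption of the mixed term via Young's inequality yields the Caccioppoli estimate
\begin{equation*}
\mint_{B_\rho(x_0)}|\nabla h|^2\,dx \le \frac{c}{\rho^2}\mint_{B_{2\rho}(x_0)}|h-\bar h_{B_{2\rho}(x_0)}|^2\,dx.
\end{equation*}

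Next, I apply the Sobolev--Poincar\'e inequality on the ball with exponent $2_* = 2n/(n+2)<2$ to the right-hand side:
\begin{equation*}
\frac{1}{\rho^2}\mint_{B_{2\rho}(x_0)}|h-\bar h_{B_{2\rho}(x_0)}|^2\,dx \le c\left(\mint_{B_{2\rho}(x_0)}|\nabla h|^{2_*}\,dx\right)^{2/2_*}.
\end{equation*}
Combining the last two displays gives the reverse H\"older inequality
\begin{equation*}
\mint_{B_\rho(x_0)}|\nabla h|^2\,dx \le c\left(\mint_{B_{2\rho}(x_0)}|\nabla h|^{2_*}\,dx\right)^{2/2_*}
\end{equation*}
valid for every ball with $B_{2\rho}(x_0)\subset B_4$.

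Finally, an application of Gehring's lemma upgrades this to
\begin{equation*}
\left(\mint_{B_\rho(x_0)}|\nabla h|^{r_1}\,dx\right)^{1/r_1} \le c\left(\mint_{B_{2\rho}(x_0)}|\nabla h|^{2}\,dx\right)^{1/2}
\end{equation*}
for some $r_1=r_1(n,\nu,L)>2$. A finite covering of $B_3$ by balls $B_\rho(x_0)$ with $B_{2\rho}(x_0)\subset B_4$ and the global energy bound (\ref{inten}) then give $\|\nabla h\|_{L^{r_1}(B_3)}\le c$, completing the proof. The only delicate point is the Bogovski\u{\i} correction needed to produce a divergence-free test function without activating the pressure; everything else is a routine adaptation of the Meyers--Gehring argument for elliptic systems.
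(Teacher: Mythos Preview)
Your argument is correct in outline and is in fact the standard route to the self-improving estimate; the paper simply invokes this as a black box, citing Theorem~2.2 of \cite{Gia} for the inequality $\int_{B_3}|\nabla h|^{r_1}\,dx\le c\bigl(\int_{B_4}|\nabla h|^2\,dx\bigr)^{r_1/2}$ and then appealing to (\ref{inten}). So your proposal unpacks what the paper quotes, rather than offering a different method.

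One technical point you glossed over: after testing with $\phi=\psi-w$, the term $\int A\nabla h:\nabla w$ is controlled by $L\|\nabla h\|_{L^2(B_{2\rho})}\|\nabla w\|_{L^2(B_{2\rho})}\le \tfrac{c}{\rho}\|\nabla h\|_{L^2(B_{2\rho})}\|h-\bar h\|_{L^2(B_{2\rho})}$, and after Young's inequality this leaves an $\epsilon\int_{B_{2\rho}}|\nabla h|^2$ on the right that cannot be absorbed into $\int_{B_\rho}|\nabla h|^2$ directly. You need the usual iteration lemma (working on intermediate radii $\rho\le s<t\le 2\rho$ with cutoffs adapted to $B_s\subset B_t$) to pass from
\[
\int_{B_s}|\nabla h|^2\le \epsilon\int_{B_t}|\nabla h|^2+\frac{c_\epsilon}{(t-s)^2}\int_{B_{2\rho}}|h-\bar h|^2
\]
to the clean Caccioppoli estimate you wrote. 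This is routine, but it is the one step in your write-up where ``absorption via Young'' is not literally sufficient.
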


\begin{proof}
By Theorem 2.2 in \cite{Gia}, there is an exponent $r_1=r_1(\nu, L,
n)>2$ such that
$$\int_{B_{3}} |\nabla h|^{r_1} dx \leq \left (\int_{B_{4}} |\nabla h|^2 dx \right)^{\frac {r_1}{2}}.$$
But by (\ref{inten}) and (\ref{intdu}), we have
$$\int_{B_{4}}|\nabla h|^2 dx \leq c \int_{B_{4}}|\nabla u|^2 dx \leq c.$$
The conclusion now follows immediately.
\end{proof}

\begin{lem}
\label{intlip} Let $(v, p_v)$ be a weak solution pair to Stokes
system (\ref{locstp1}) in $B_3$. Then there holds
$$  \|\nabla v \|_{L^\infty(B_2)^{n^2}} + \|p_v \|_{L^\infty(B_2)} \leq c.$$
\end{lem}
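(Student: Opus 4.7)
My plan: since $\bar{A}_{B_4}$ is a constant matrix satisfying the same ellipticity and boundedness bounds as $A$, problem \eqref{locstp1} is a homogeneous constant-coefficient generalized Stokes system on the smooth domain $B_3$. The strategy is the classical one: upgrade the $L^2$ energy bound \eqref{inten} to an interior $L^\infty$ bound via bootstrapping, first for $\nabla v$, then for $p_v$.

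\textbf{Step 1: Higher interior Sobolev regularity of $v$.} I apply the difference quotient method in $B_3$. For a unit vector $e_k$ and small $|s|>0$, the translates $v(\cdot+se_k)$, $p_v(\cdot+se_k)$ solve the same constant-coefficient Stokes system on a slightly shifted ball, so the difference quotients $\Delta_s^k v$, $\Delta_s^k p_v$ satisfy the same homogeneous system in a slightly smaller ball. Testing against $\eta^2\Delta_s^k v$ with a smooth cutoff $\eta$ (compensating the failure of $\eta^2\Delta_s^k v$ to be divergence-free via a Bogovskii-type corrector, which is available on the Lipschitz domain $B_3$) and using uniform ellipticity yields a Caccioppoli-type inequality for $\Delta_s^k v$. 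Passing $s\to 0$ gives $\partial_k v\in W^{1,2}_{\mathrm{loc}}(B_3)$. Iterating the procedure shows $v\in W^{m,2}_{\mathrm{loc}}(B_3)$ for every $m\in\mathbb{N}$, with quantitative bounds
\[
\|v\|_{W^{m,2}(B_{5/2})} \le c_m\,\|\nabla v\|_{L^2(B_3)}.
\]

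\textbf{Step 2: $L^\infty$ bound on $\nabla v$.} Choosing $m$ large enough that $W^{m,2}\hookrightarrow C^1$ by Sobolev embedding,
\[
\|\nabla v\|_{L^\infty(B_2)} \le c\,\|v\|_{W^{m,2}(B_{5/2})} \le c\,\|\nabla v\|_{L^2(B_3)} \le c,
\]
where the last inequality is \eqref{inten}.

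\textbf{Step 3: $L^\infty$ bound on $p_v$.} I first normalize $p_v$ by $\int_{B_3}p_v\,dx=0$ (otherwise replace $p_v$ by $p_v-(p_v)_{B_3}$). Since $v$ is smooth in the interior, differentiating \eqref{locstp1} gives pointwise
\[
\nabla p_v = \mathrm{div}\bigl(\bar{A}_{B_4}\nabla v\bigr) \quad \text{in } B_{5/2},
\]
so $\|\nabla p_v\|_{L^\infty(B_2)}\le c\|\nabla^2 v\|_{L^\infty(B_2)}\le c$ by one more iteration of Step 2. Hence $p_v-(p_v)_{B_2}$ is bounded in $B_2$ by Poincar\'e. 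For the mean, the normalization $\int_{B_3}p_v=0$ gives $(p_v)_{B_2}=-|B_2|^{-1}\int_{B_3\setminus B_2}p_v\,dx$, so by Cauchy--Schwarz $|(p_v)_{B_2}|\le c\|p_v\|_{L^2(B_3)}$. The $L^2$ norm of $p_v$ is then controlled via the pressure/duality argument in Lemma \ref{divlem} (applied on $B_3$ with the constant coefficient $\bar{A}_{B_4}$ and $\F=0$), yielding $\|p_v\|_{L^2(B_3)}\le c\|\nabla v\|_{L^2(B_3)}\le c$.

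The main obstacle is the pressure: the difference-quotient step must respect the divergence-free constraint on the test space, which requires a Bogovskii corrector, and the $L^\infty$ bound on $p_v$ is only meaningful after fixing a normalization. Both issues are handled by the div-equation solvability on the Lipschitz domain $B_3$ (Lemma \ref{lemdiv}), which is already in force in the proof of Lemma \ref{divlem}; once $v$ is shown to be smooth with quantitative $L^2$-to-$L^\infty$ bounds on its derivatives, the pressure bound follows directly by reading off $\nabla p_v$ from the equation and controlling the additive constant through its zero-mean normalization.
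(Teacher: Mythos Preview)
Your argument is correct: difference quotients give $v\in W^{m,2}_{\mathrm{loc}}$ for every $m$, Sobolev embedding yields the $L^\infty$ bound on $\nabla v$, and the pressure is handled by reading $\nabla p_v$ off the equation and controlling the additive constant through the zero-mean normalization together with the duality argument of Lemma~\ref{divlem}. The paper proceeds identically in substance but simply cites \cite{Gia} for the interior $L^\infty$ estimate and then invokes \eqref{inten}; you have spelled out what that citation contains.
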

\begin{proof}
According to a known regularity for the limiting problem
(\ref{locstp1}), see \cite{Gia},  we have
$$ \|\nabla v \|_{L^\infty(B_2)^{n^2}} + \|p_v \|_{L^\infty(B_2)} \leq c \|\nabla v\|_{L^{2}(B_3)^n}^2.$$
Then the conclusion follows from (\ref{inten}).
\end{proof}

\begin{lem}
\label{intucp} For any $0 < \epsilon  <1$, there exists a small
$\delta=\delta(\epsilon)>0$ such that if
\begin{equation}
\label{intassum1} \mint_{B_4} |A-\bar{A}_{B_4}|^2+ | \F |^2 dx \leq
\delta^2
\end{equation}
for any weak solution pair $(u,p)$ to (\ref{st}) with (\ref{intdu}),
then one can find a weak solution pair $(v, p_v)$ to (\ref{locstp1})
in $B_3$ such that
\begin{equation*}
\mint_{B_3}|\nabla (u-v)|^2+|p-p_v|^2 dx \leq \epsilon^2.
\end{equation*}
\end{lem}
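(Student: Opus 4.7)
The plan is a two-step comparison via the intermediate function $h$: first estimate $(u-h, p-p_h)$ on $B_4$ using the smallness of $\F$, then estimate $(h-v, p_h-p_v)$ on $B_3$ using the smallness of $A-\bar A_{B_4}$ together with higher integrability of $\nabla h$. Throughout, we normalize $p$, $p_h$, $p_v$ to have zero mean over the relevant ball so that pressures are uniquely determined.

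For the first step, since $h=u$ on $\partial B_4$ and both are divergence-free, $u-h\in W^{1,2}_{0,\sigma}(B_4)^n$. Subtracting the weak formulations of \eqref{locstp} and \eqref{locstp2} and testing with $\phi=u-h$ kills the pressure term, and ellipticity combined with Cauchy--Schwarz yields $\|\nabla(u-h)\|_{L^2(B_4)}\leq \tfrac{1}{\nu}\|\F\|_{L^2(B_4)}\leq c\delta$ by \eqref{intassum1}. For the pressure, since $B_4$ is a John domain, Lemma \ref{lemdiv} supplies $\phi\in W^{1,2}_0(B_4)^n$ with $\mathrm{div}\,\phi=p-p_h$ and $\|\nabla\phi\|_{L^2}\leq c\|p-p_h\|_{L^2}$; testing the difference equation with this $\phi$ gives $\|p-p_h\|_{L^2(B_4)}\leq c\delta$.

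For the second step, $v-h\in W^{1,2}_{0,\sigma}(B_3)^n$. Writing
\begin{equation*}
\mathrm{div}\bigl(\bar A_{B_4}\nabla(v-h)\bigr)-\nabla(p_v-p_h)=\mathrm{div}\bigl((\bar A_{B_4}-A)\nabla h\bigr)
\end{equation*}
and testing with $v-h$, ellipticity yields
\begin{equation*}
\nu\|\nabla(v-h)\|_{L^2(B_3)}^2\leq \int_{B_3}|A-\bar A_{B_4}|\,|\nabla h|\,|\nabla(v-h)|\,dx.
\end{equation*}
Let $r_1>2$ be the Meyers exponent from Lemma \ref{higher}, and apply H\"older with exponents $\tfrac{2r_1}{r_1-2}$, $r_1$, $2$. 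The key interpolation is $|A-\bar A_{B_4}|^{2r_1/(r_1-2)}\leq (2L)^{4/(r_1-2)}|A-\bar A_{B_4}|^2$, which together with \eqref{intassum1} and Lemma \ref{higher} gives $\|\nabla(v-h)\|_{L^2(B_3)}\leq c\,\delta^{(r_1-2)/r_1}$. The pressure comparison $\|p_v-p_h\|_{L^2(B_3)}\leq c\,\delta^{(r_1-2)/r_1}$ is then obtained by again invoking Lemma \ref{lemdiv} on $B_3$ to construct a test function with divergence equal to $p_v-p_h$.

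Combining the two steps through the triangle inequality gives
\begin{equation*}
\Bigl(\mint_{B_3}|\nabla(u-v)|^2+|p-p_v|^2\,dx\Bigr)^{1/2}\leq c\,\delta^{(r_1-2)/r_1},
\end{equation*}
and choosing $\delta$ small enough that the right-hand side is below $\epsilon$ closes the argument. The main obstacle is the second step: extracting a positive power of $\delta$ from the $L^2$-average smallness of $A-\bar A_{B_4}$ requires both Giaquinta--Meyers type higher integrability for $\nabla h$ (Lemma \ref{higher}) and the interpolation trick exploiting the $L^\infty$ bound on $A$. The pressure comparison, although conceptually routine, also needs careful normalization and a Bogovskii-type right inverse of the divergence on balls, which is supplied by Lemma \ref{lemdiv}.
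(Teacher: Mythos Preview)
Your proposal is correct and follows essentially the same two-step comparison $(u\to h\to v)$ as the paper, with the same H\"older/higher-integrability trick yielding the power $\delta^{(r_1-2)/r_1}$. The only cosmetic difference is that the paper cites the packaged energy estimate Lemma~\ref{divlem} for both gradient and pressure at once, whereas you unpack it by testing directly for the gradient and then invoking Lemma~\ref{lemdiv} (Bogovski\u{\i}) for the pressure---which is exactly how Lemma~\ref{divlem} itself is proved.
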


\begin{proof}
Let $(h, p_h)$ be a weak solution pair to (\ref{locstp2}). Then
$(u-h, p-p_h) \in W^{1,2}_{0, \sigma}(B_{4})\times L^2(B_{4})$ is a
weak solution pair to
\begin{equation*}
\left\{\begin{array}{rlcc}
\textrm {div } A\nabla (u-h) -\nabla (p-p_h)&=  \textrm {div } \F &\textrm{ in } &B_{4} \\
\textrm {div } (u-h)&=0  &\textrm{ in } &B_{4} \\
u-h&=0  &\textrm{ on } &\partial B_{4}.
\end{array}\right.
\end{equation*}
Using the Lemma \ref{divlem} and (\ref{intassum1}), it follows that
\begin{equation}
\label{estima1} \mint_{B_{4}} |\nabla(u-h)|^2 +  |p-p_h|^2 dx \leq
c\delta^2.
\end{equation}
For a weak solution pair $(v, v_p)$ to (\ref{locstp1}),  $(h-v, p_h
- p_v) \in W^{1,2}_{0, \sigma}(B_{3})^n \times L^2(B_3)$ is a weak
solution pair to
\begin{equation*}
\left\{\begin{array}{rlcc}
\textrm {div } \bar{A}_{B_{4}}\nabla (h-v) -\nabla (p_h - p_v)&=-\textrm{div }\big ((A-\bar{A}_{B_{4}})\nabla h \big )&\textrm{ in } &B_{3} \\
\textrm {div } (h-v)&=0  &\textrm{ in } &B_{3} \\
h-v&=0  &\textrm{ on } &\partial B_{3}.
\end{array}\right.
\end{equation*}
Then by Lemma \ref{divlem}, H\"{o}lder inequality and the
boundedness of $A(x)$, we estimate
\begin{eqnarray*}
&  &     \mint_{B_{3}}|\nabla(h-v)|^2 +|p_h-p_v|^2 dx \\
&  &    \hspace{3cm} \leq c \mint_{B_{3}}\left| A-\bar {A}_{B_{4}}\right|^2|\nabla h|^2 dx\\
&  &    \hspace{3cm} \leq c \left( \mint_{B_3}\left| A-\bar {A}_{B_{4}}\right|^{\frac {2r_1}{r_1-2}} dx  \right)^{\frac {r_1-2}{r_1}} \left(\mint_{B_{3}} |\nabla h|^{r_1} dx\right)^{\frac {2}{r_1}}\\
&  &    \hspace{3cm} \leq  c \left( \mint_{B_{3}}\left| A-\bar {A}_{B_{4}} \right|^2 \left| A-\bar {A}_{B_{4}} \right|^{\frac {4}{r_1-2}} dx  \right)^{\frac {r_1-2}{r_1}}\\
&  &    \hspace{3cm} \leq  c \left( \mint_{B_{4}}\left| A-\bar {A}_{B_{4}}\right|^2 dx  \right)^{\frac {r_1-2}{r_1}}\\
&  &    \hspace{3cm} \leq  c \delta^{2-\frac{4}{r_1}}.
\end{eqnarray*}
These estimates and (\ref{estima1}) imply
$$\mint_{B_3} | \nabla(u-v)|^2 +|p-p_v|^2 dx \leq c \left( \delta^2+\delta^{2-\frac{4}{r_1}} \right) \leq \epsilon^2,$$
by taking $\delta>0$ so small that the last inequality holds. This
finishes the proof.
\end{proof}

We next extend the interior comparison estimate obtained in Lemma
\ref{intucp} to find its boundary version. To do this, based on the
definition of the $(\delta, R)$-Reifenberg flatness, we are under
the following geometric setting
\begin{equation}
\label{domain} B_{6}^+ \subset \Omega_{6} \subset B_{6} \cap \{x_n
> -12\delta \}.
\end{equation}

From now on we consider a localized problem, the homogeneous
problem, the reference problem and a limiting problem as follows:
\begin{equation}
\label{bdstu} \left\{\begin{array}{rlcc}
\textrm{ div } (A\du) -\nabla p&= \textrm {div } \F &\textrm { in }& \Omega_6  \\
\textrm{ div }u &= 0 &\textrm { in }&  \Omega_6 \\
u&=0 \, &\textrm { on }&  \partial_w \Omega_6,
       \end{array}\right.
\end{equation}

\begin{equation}
\label{bdsth} \left\{\begin{array}{rlcc}
\textrm{ div } (A\nabla h) - \nabla p_h&= 0 &\textrm { in }& \Omega_5  \\
\textrm{ div }h &= 0 &\textrm { in }&  \Omega_5 \\
h&=u \, &\textrm { on }&  \partial \Omega_5,
       \end{array}\right.
\end{equation}

\begin{equation}
\label{bdstw} \left\{\begin{array}{rlcc}
\textrm{ div } (\bar{A}_{B_6^+}\nabla w) - \nabla p_w&= 0 &\textrm { in }&  \Omega_4  \\
\textrm{ div }w &= 0 &\textrm { in }&   \Omega_4 \\
w&=h \, &\textrm { on }&  \partial \Omega_4,
       \end{array}\right.
\end{equation}

and
\begin{equation}
\label{bdst} \left\{\begin{array}{rlcc}
\textrm{ div } (\bar{A}_{B_6^+}\nabla v) - \nabla p_v&= 0 &\textrm { in }& B_4^+  \\
\textrm{ div }v &= 0 &\textrm { in }&  B_4^+ \\
v&=0 \, &\textrm { on }&  T_4.
       \end{array}\right.
\end{equation}

$L^2$-estimates for $h$ and $w$ are derived by selecting the test
function $h-u$ for (\ref{bdsth}) and $w-h$ for (\ref{bdstw}),
respectively, and computing in a typical way along with (\ref{ell}).
We have
\begin{equation}
\label{29} \mint_{\Omega_5} |\nabla h|^2 dx \leq c \mint_{\Omega_5}
|\nabla u|^2 dx \ \textrm{ and }\ \mint_{\Omega_4} |\nabla w|^2 dx
\leq c \mint_{\Omega_4} |\nabla h|^2 dx.
\end{equation}
We further assume that
\begin{equation}
\label{bddu} \mint_{\Omega_5}|\du|^2 + |p|^2 dx \leq 1.
\end{equation}
Then by (\ref{29}), we discover that
\begin{equation}
\label{bdd} \mint_{\Omega_5}|\nabla h|^2 dx \leq c \textrm{ and }
\mint_{\Omega_4}|\nabla w|^2 dx \leq c.
\end{equation}

As in Lemma \ref{higher}, the gradient of $h$, which is the weak
solution to (\ref{bdsth}), has a higher integrability near the
boundary. This is the following lemma.

\begin{lem}
\label{bdhigher} Let $h \in W^{1,2}_\sigma(\Omega_{5})^n$ be the
weak solution to (\ref{bdsth}) with (\ref{domain}) satisfying
$\mint_{\Omega_5}|\nabla h|^2 dx \leq 1$. Then there exists an
exponent $r_2=r_2(n, \Omega, \nu, L)>2$ such that  $\|\nabla h
\|_{L^{r_2}(\Omega_{4})} \leq c$.
\end{lem}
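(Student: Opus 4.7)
The plan is to apply the classical Meyers--Gehring higher integrability technique, adapted to the Stokes system on a Reifenberg-flat domain. Concretely, I will prove a reverse H\"older inequality
\begin{equation*}
\left(\mint_{B_\rho(x_0) \cap \Omega_5}|\nabla h|^2\, dx\right)^{1/2} \le c \left(\mint_{B_{2\rho}(x_0)\cap \Omega_5}|\nabla h|^q\, dx\right)^{1/q}
\end{equation*}
for some $q\in(1,2)$, valid uniformly for $x_0\in\overline{\Omega_4}$ and all sufficiently small $\rho$. Gehring's self-improvement lemma will then upgrade the integrability to some exponent $r_2>2$, and a finite covering of $\overline{\Omega_4}$ together with the hypothesis $\mint_{\Omega_5}|\nabla h|^2\, dx\le 1$ will yield $\|\nabla h\|_{L^{r_2}(\Omega_4)}\le c$.

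The reverse H\"older inequality is obtained by combining a Caccioppoli estimate with a Sobolev--Poincar\'e inequality. For $x_0\in\partial\Omega\cap B_4$, note that since $u\equiv 0$ on $\partial_w\Omega_6$, we have $h=u=0$ on $\partial\Omega\cap B_5$ in the trace sense. Testing (\ref{bdsth}) against $\phi=\eta^2 h - \mathcal{B}(\mathrm{div}(\eta^2 h))$, where $\eta\in C_c^\infty(B_{2\rho}(x_0))$ is a standard cutoff and $\mathcal{B}$ is the Bogovskii operator on $B_{2\rho}(x_0)\cap\Omega_5$, makes $\phi$ divergence-free so that the pressure term drops out of the weak formulation. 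Using the ellipticity of $A$ and the $L^2\to W^{1,2}_0$ boundedness of $\mathcal{B}$, and absorbing in the usual way, yields the boundary Caccioppoli estimate
\begin{equation*}
\int_{B_\rho(x_0)\cap\Omega_5}|\nabla h|^2\,dx \le \frac{c}{\rho^2}\int_{B_{2\rho}(x_0)\cap\Omega_5}|h|^2\,dx.
\end{equation*}
The zero trace on $\partial\Omega\cap B_{2\rho}$ then allows extension of $h$ by zero and hence the Sobolev--Poincar\'e inequality with exponent $q=2n/(n+2)$, producing the required reverse H\"older bound. For interior balls $B_{2\rho}(x_0)\subset\Omega_5$ the mean $\overline{h}_{B_{2\rho}(x_0)}$ replaces zero, and the argument is exactly the one used to prove Lemma~\ref{higher}.

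The principal obstacle is to verify that the norm of the Bogovskii operator on $B_{2\rho}(x_0)\cap\Omega_5$ and the constants in the Sobolev--Poincar\'e inequality there are uniform in $x_0$ and $\rho$. This uniformity rests on the fact, already exploited in the paper's references \cite{Aika, Buck, Kenig}, that for sufficiently small $\delta$ the intersections $B_{2\rho}(x_0)\cap\Omega$ are John domains with John constants depending only on $\delta$ and $n$. Once this uniformity is in place, the application of Gehring's lemma is routine and produces the exponent $r_2=r_2(n,\Omega,\nu,L)>2$ claimed in the statement.
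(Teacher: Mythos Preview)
Your proposal is correct and follows the standard Meyers--Gehring route, but it handles the pressure differently from the paper. The paper tests (\ref{bdsth}) with the \emph{non}-divergence-free function $\eta^2 h$, which produces a pressure term $\int \eta|p_h-\overline{(p_h)}||\nabla\eta||h|\,dx$; it then controls $\|p_h-\overline{(p_h)}\|_{L^2}$ by $\|\mathrm{div}(A\nabla h)\|_{W^{-1,2}}\le c\|\nabla h\|_{L^2}$ via the divergence inequality (Lemma~\ref{lemdiv} and \cite[(0.8)]{Gia}), feeds this back into the Caccioppoli inequality, and applies Sobolev--Poincar\'e and Gehring. You instead subtract a Bogovski\u{\i} correction so that the test function is divergence-free and the pressure never appears; the Caccioppoli estimate then follows directly (with the usual $\epsilon\!\int_{B_{2\rho}\cap\Omega}|\nabla h|^2$ remainder, which is harmless for Gehring or removable by Giaquinta--Modica iteration). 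Your route is slightly cleaner in that it bypasses the separate pressure estimate, at the price of having to invoke the Bogovski\u{\i} operator on each local piece $B_{2\rho}(x_0)\cap\Omega$ and to check the uniformity of its norm; the paper's route packages the same dependence into the single global use of Lemma~\ref{lemdiv}, but is sketchier about the localisation needed for Gehring. In either case the crux is the uniform John property of the sets $B_{2\rho}(x_0)\cap\Omega$ for Reifenberg-flat $\Omega$, which you correctly identify and which is supplied by \cite{Acos, Aika, Kenig}.
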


\begin{proof}
Let $\eta$ be a standard cut off function such that $\eta \in
C^\infty_0(B_5)$, $0 \leq \eta \leq 1$, $\eta \equiv 1$ on $B_4$,
and $|\nabla \eta| \leq c$. Substituting the test function $\eta^2
h$ in (\ref{bdsth}), and using (\ref{ell}) and Young's inequality in
a standard way, we see
\begin{eqnarray}
\label{30}
\int_{\Omega_4}|\nabla h|^2 dx &\leq& c \left( \int_{\Omega_5}|h|^2|\nabla \eta|^2 dx+ \int_{\Omega_5}\eta |p-\bar{p}_{\Omega_5}| |\nabla \eta| |h|dx \right)\\
&\leq& c \left( \int_{\Omega_5}|h|^2|\nabla \eta|^2 dx+
\int_{\Omega_5} \eta^2 |p-\bar{p}_{\Omega_5}|^2 dx  \right). \nonumber
\end{eqnarray}
By Lemma \ref{lemdiv}, we can use inequality (0.8) in \cite{Gia},
which gives us
\begin{eqnarray*}
\|p-\bar{p}_{\Omega_5}\|_{L^2(\Omega_5)} &\leq& c\|\textrm{div }A(x) \nabla h \|_{W^{-1, 2}(\Omega_5)^n}\\
&\leq& c\|k \|_{W^{1,2}_0(\Omega_5)^n}\\
 &\leq& c\| \nabla k \|_{L^{2}(\Omega_5)^{n^2}},
\end{eqnarray*}
where $k \in W^{1,2}_0(\Omega_5)^n$ satisfies
$$\int_{\Omega_5} \nabla k : \nabla \phi \ dx= \int_{\Omega_5}  A(x)\nabla h : \nabla \phi dx$$
for all $\phi \in W^{1,2}_0(\Omega_5)^n$. Putting $\phi =k$, we have
$$\int_{\Omega_5} |\nabla k|^2 dx \leq c \int_{\Omega_5} |\nabla h| |\nabla k| dx.$$
Then by Young's inequality, we find
$$\int_{\Omega_5} |\nabla k|^2 dx \leq c \int_{\Omega_5} |\nabla h|^2 dx.$$
Consequently, we discover
$$\int_{\Omega_5} |p-\bar{p}_{\Omega_5}|^2 dx \leq c \int_{\Omega_5} |\nabla h|^2.$$
Since a $\delta$-Reifenberg flat domain has the measure density
condition, Sobolev inequality holds true on this domain, see
\cite{Toro}. Applying the estimate of pressure to (\ref{30}) and
using Sobolev inequality, we obtain
$$\int_{\Omega_4}|\nabla h|^2 dx \leq \frac 12 \int_{\Omega_5}|\nabla h|^2 dx + c \left( \int_{\Omega_5}|\nabla h|^{2_*} dx \right)^{\frac {2}{2_*}},$$
where $2_*= \frac{2n}{n+2}$. Then the conclusion comes from Gehring
lemma.
\end{proof}

We need the following better regularity for the limiting problem
(\ref{bdst}).

\begin{lem}
\cite{Gia} \label{bdlip} Let $(v, p_v)$ be a weak solution pair to
the Stokes system (\ref{bdst}) in $B^+_4$. Then we have
$$  \|\nabla v \|_{L^\infty(B^+_3)^{n^2}} \leq c \|\nabla v\|_{L^{2}(B_4^+)^{n^2}}$$
and
$$\| p_v \|_{L^\infty(B^+_3)} \leq c \left ( \|\nabla v\|_{L^{2}(B_4^+)^{n^2}} +   \|p_v\|_{L^{2}(B_4^+)} \right).$$

\end{lem}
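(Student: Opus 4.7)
My plan is to bootstrap regularity by exploiting the constancy of $\bar A_{B_6^+}$ and the flatness of $T_4$, and then to conclude via Sobolev embedding. First I would establish a Caccioppoli inequality $\|\nabla v\|_{L^2(B_r^+)} \leq C\|v\|_{L^2(B_R^+)}$ for $3 < r < R \leq 4$, by testing the weak formulation against $\eta^2 v$ with $\eta$ cutting off inside $B_R^+$, using ellipticity to absorb the left-hand side, and handling the pressure term by a $W^{-1,2}$-estimate on $p_v$ of the type already exploited in the proof of Lemma \ref{bdhigher}.

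Next I would apply Nirenberg's tangential difference-quotient method. Since $v=0$ on $T_4$, for any $k \in \{1,\ldots,n-1\}$ and small $h$ the quotient $\Delta_h^k v$ is admissible, and constancy of $\bar A_{B_6^+}$ implies that $(\Delta_h^k v,\Delta_h^k p_v)$ solves the same homogeneous Stokes system on a slightly translated half-ball. Testing with $\eta^2 \Delta_h^k v$ and invoking the Caccioppoli estimate yields uniform bounds on $\nabla \Delta_h^k v$ and on $\Delta_h^k p_v$, so after sending $h \to 0$ I obtain $\partial_k v \in W^{1,2}$ and $\partial_k p_v \in L^2$ on slightly shrunken half-balls for every tangential $k$. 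Normal regularity is then recovered from the PDE: the divergence-free condition gives $\partial_n v^n = -\sum_{k<n}\partial_k v^k$, and the first $n-1$ components of the equation, combined with coercivity of the block $(\bar A^{nn}_{ij})_{i,j<n}$ (which follows from \eqref{ell} by specializing $\xi^j_\beta = \eta^j\delta_{\beta n}$), can be solved algebraically for $\partial_n^2 v^j$ with $j<n$; the $i=n$ equation then yields $\partial_n p_v$. Iterating this tandem on a shrinking sequence of radii $r_m \to 3$ gives $v \in W^{m,2}(B^+_{r_m})$ and $p_v \in W^{m-1,2}(B^+_{r_m})$ for every $m$; taking $m > n/2 + 1$ and applying Sobolev embedding up to the flat boundary produces the stated $L^\infty$ bounds.

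The main subtlety lies in the pressure. Unlike $v$, the pressure $p_v$ has no boundary condition to preserve under $\Delta_h^k$, so its tangential regularity at each step must be extracted from the PDE after the velocity has been upgraded, not from a direct energy estimate. This same asymmetry explains why the final estimate for $\|p_v\|_{L^\infty}$ inherits the extra $\|p_v\|_{L^2(B_4^+)}$ term on the right-hand side, whereas the bound for $\|\nabla v\|_{L^\infty}$ involves only $\|\nabla v\|_{L^2(B_4^+)}$: the PDE determines derivatives of $p_v$ in terms of derivatives of $v$, but fixes $p_v$ itself only up to a constant, which must be accounted for directly in the $L^2$-to-$L^\infty$ step.
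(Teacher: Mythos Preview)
The paper does not actually prove this lemma: it is stated with a citation to \cite{Gia} (Giaquinta--Modica) and used as a black box, in the same way the interior analogue Lemma~\ref{intlip} is quoted. So there is no ``paper's proof'' to compare against beyond the reference.

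Your outline is a correct sketch of the classical route to such boundary estimates for constant-coefficient Stokes systems on a flat piece of boundary: Caccioppoli inequality, Nirenberg tangential difference quotients (which commute with the constant operator and preserve the condition $v=0$ on $T_4$), algebraic recovery of the normal-normal second derivatives and of $\partial_n p_v$ from the system together with $\textrm{div}\,v=0$, iteration, and Sobolev embedding. This is essentially the method of \cite{Gia}. One small comment: your two remarks about the pressure are slightly at odds with each other. In the difference-quotient step you \emph{do} get a direct $L^2$ bound on $\Delta_h^k p_v$ (up to an additive constant) from the $W^{-1,2}$ pressure estimate applied to the translated pair $(\Delta_h^k v,\Delta_h^k p_v)$, exactly as in your Caccioppoli step; it is not that tangential regularity of $p_v$ must be read off the PDE only \emph{after} upgrading $v$. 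What is true, and what you correctly identify at the end, is that all these steps control only $\nabla p_v$ in terms of $\nabla v$, so the undetermined additive constant forces the extra $\|p_v\|_{L^2(B_4^+)}$ on the right-hand side of the final $L^\infty$ bound.
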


\begin{lem}
\label{vcp} For any $0 < \epsilon  <1$, there exists a sufficiently
small $\delta=\delta(\epsilon)>0$ such that if $(w, p_w)$ is a weak
solution pair to (\ref{bdstw}) with (\ref{domain}) and the following
normalization condition
\begin{equation}
\label{45} \mint_{\Omega_4}|\nabla w|^2 + |p_w|^2 dx \leq 1,
\end{equation}
then there exists a weak solution pair $(v, p_v)$ to (\ref{bdst}) in
$B_4^+$ with
$$\mint_{B_4^+}|\nabla v|^2 +|p_v|^2 dx \leq 1$$
such that
$$\mint_{B^+_4}|w-v|^2 dx \leq \epsilon^2.$$
\end{lem}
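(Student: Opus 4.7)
The argument is a compactness-contradiction scheme exploiting that as $\delta\to 0$ the Reifenberg domain $\Omega_4$ collapses onto $B_4^+$, so any weak limit of $(w,p_w)$ must solve \eqref{bdst} on $B_4^+$.

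Suppose the conclusion fails: there exist $\epsilon_0>0$ and sequences $\delta_k\downarrow 0$, domains $\Omega_k$ satisfying \eqref{domain} with $\delta_k$, constant matrices $A_k^0:=\bar{A}_{k,B_6^+}$ each obeying \eqref{ell}, and weak solution pairs $(w_k,p_{w,k})$ to the corresponding \eqref{bdstw} satisfying \eqref{45}, such that for every weak solution pair $(v,p_v)$ of the constant-coefficient Stokes system with coefficient $A_k^0$ in $B_4^+$ and unit normalization one has $\mint_{B_4^+}|w_k-v|^2\,dx>\epsilon_0^2$. Chasing the boundary conditions in \eqref{bdstu}--\eqref{bdstw}, one sees $w_k=h_k=u_k=0$ on the Reifenberg portion $\partial\Omega_k\cap B_4$, so the extension by zero $\tilde w_k$ belongs to $W^{1,2}(B_4)$; since $\tilde w_k\equiv 0$ on $B_4\cap\{x_n\leq -12\delta_k\}$, a set of measure bounded away from zero, a Poincar\'e inequality on $B_4$ combined with \eqref{45} gives $\|\tilde w_k\|_{W^{1,2}(B_4)}\leq c$. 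Passing to subsequences, $A_k^0\to A^\infty$ (still elliptic and bounded), $\tilde w_k\rightharpoonup w^\infty$ weakly in $W^{1,2}(B_4)$ and strongly in $L^2(B_4)$, and $p_{w,k}\rightharpoonup p^\infty$ weakly in $L^2(B_4^+)$.

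The next step identifies $(w^\infty,p^\infty)$ as a weak solution pair to the constant-coefficient Stokes system with coefficient $A^\infty$ on $B_4^+$, with $w^\infty=0$ on $T_4$. Since $|B_4\cap\{-12\delta_k<x_n<0\}|\to 0$ and $w_k$ is uniformly bounded in $L^{2^{*}}(B_4)$ by Sobolev embedding, one obtains $\tilde w_k\to 0$ in $L^2(B_4^-)$, whence $w^\infty=0$ a.e.\ in $B_4^-$ and the $W^{1,2}(B_4)$ trace theorem forces $w^\infty|_{T_4}=0$. Test functions $\phi\in C_c^\infty(B_4^+)^n$ eventually lie in $W^{1,2}_0(\Omega_{k,4})^n$, so passing to the weak limit in the weak formulation of \eqref{bdstw} yields both the limiting PDE and divergence-freeness; weak lower semicontinuity together with $|\Omega_{k,4}|/|B_4^+|\to 1$ delivers the bound $\mint_{B_4^+}|\nabla w^\infty|^2+|p^\infty|^2\,dx\leq 1$.

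To close the contradiction, for each $k$ let $(v_k,p_{v_k})$ be the Stokes solution in $B_4^+$ with coefficient $A_k^0$ and Dirichlet data $v_k=w^\infty$ on $\partial B_4^+$; divergence-free compatibility holds since $w^\infty$ is solenoidal, and automatically $v_k=0$ on $T_4$. The difference $v_k-w^\infty$ vanishes on all of $\partial B_4^+$ and solves a Stokes system whose right-hand side is $\textrm{div}((A^\infty-A_k^0)\nabla w^\infty)$, so Lemma \ref{divlem} yields $\|v_k-w^\infty\|_{W^{1,2}(B_4^+)}+\|p_{v_k}-p^\infty\|_{L^2(B_4^+)}\to 0$. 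Combined with $w_k\to w^\infty$ in $L^2(B_4^+)$, this contradicts $\mint_{B_4^+}|w_k-v_k|^2\,dx>\epsilon_0^2$ for $k$ large, after absorbing a $1+o(1)$ overshoot in the normalization of $v_k$ via harmless rescaling. The main technical obstacle is the zero-trace argument for $w^\infty$ on $T_4$ realized through the extension-by-zero across the Reifenberg boundary; the remainder follows standard weak-compactness machinery.
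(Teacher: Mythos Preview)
Your argument is correct and follows essentially the same compactness--contradiction scheme as the paper: zero-extend across the Reifenberg boundary, extract a weak limit in $W^{1,2}(B_4)\times L^2(B_4^+)$, identify it as a solution pair on $B_4^+$ with zero trace on $T_4$, and contradict via the strong $L^2$ convergence $w_k\to w^\infty$. The only substantive difference is that you let the constant coefficient $A_k^0$ vary along the contradicting sequence, whereas the paper keeps $\bar{A}_{B_6^+}$ fixed and simply takes $v=w_0$; your choice forces the auxiliary construction of $v_k$ via Lemma~\ref{divlem} and the $1+o(1)$ rescaling, which buys uniformity of $\delta$ over the ellipticity class $(\nu,L)$, and your justification of the trace $w^\infty|_{T_4}=0$ is more explicit than the paper's.
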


\begin{proof}
We prove this lemma by contradiction. If not, then there exist
$\epsilon_0
>0$, $\{(w_k, p_{w_k})\}^\infty_{k=1}$, and
$\{\Omega^k_4\}^\infty_{k=1}$ such that $(w_k, p_{w_k}) \in
W^{1,2}_\sigma(\Omega_4^k)^n \times L^2(\Omega_4^k)$ is a weak
solution pair to
\begin{equation}
\label{46} \left\{\begin{array}{rlcc}
\textrm{div }(\bar{A}_{B^+_6} \nabla w_k) -\nabla p_{w_k}&= 0 &\textrm{ in } &\Omega_4^k \\
\textrm{div } w_k&= 0 &\textrm{ in } &\Omega_4^k \\
w_k&= 0 &\textrm{ on } &\partial_w \Omega_4^k
\end{array}\right.
\end{equation}
with
\begin{equation}
\label{47} B_{6}^+ \subset \Omega_{6}^k \subset B_{6} \cap \left
\{x_n
> -\frac{12}{k} \right \} \textrm{ and }\mint_{\Omega_4^k} |\nabla
w_k|^2 +|p_{w_k}|^2 dx \leq 1.
\end{equation}
But it holds that
\begin{equation}
\label{48} \mint_{B^+_4}|w_k-v|^2 dx > \epsilon_0^2
\end{equation}
for any weak solution $v$ to (\ref{bdst}) satisfying
$\mint_{B^+_4}|\nabla v|^2 +|p_v|^2 dx \leq 1$.

Since we can say $(w_k, p_{w_k})=0$ in $B_4 \setminus \Omega_4$ by
the zero extension from the fact $w=h=u=0$ on $\partial_w \Omega_4$,
$(w_k, p_{w_k})$ is uniformly bounded in $W^{1, 2}_\sigma(B_4)^n
\times L^2(B_4)$ in view of Poincar\'{e} inequality with (\ref{47}).
It implies that $(w_k, p_{w_k})$ is uniformly bounded in $W^{1,
2}_\sigma(B^+_4)^n  \times L^2(B_4^+)$. Thus there exists a
subsequence, which we still denote by $\{(w_k, p_{w_k})\}$, and
$(w_0, p_{w_0}) \in W^{1,2}_\sigma(B^+_4)^n \times L^2(B_4^+)$ such
that
\begin{equation}
\label{49} \left\{\begin{array}{ccc}
w_k \rightharpoonup w_0 &\textrm{ in } &W^{1,2}_\sigma(B^+_4)^n\\
w_k \to w_0 &\textrm{ in } &L^{2}(B^+_4)^n \\
p_{w_k} \rightharpoonup p_{w_0} &\textrm{ in } &L^{2}(B^+_4).
\end{array}\right.
\end{equation}

From (\ref{46}), (\ref{47}) and (\ref{48}), it follows that
\begin{equation*}
\left\{\begin{array}{rlcc}
\textrm{div }(\bar{A}_{B^+_6} \nabla w_0) -\nabla p_{w_0}&= 0 &\textrm{ in } &B_4^+ \\
\textrm{div } w_0&= 0 &\textrm{ in } &B_4^+ \\
w_0&= 0 &\textrm{ on } &T_4.
\end{array}\right.
\end{equation*}
Furthermore, it follows from (\ref{47}) and (\ref{49}) that
$$\mint_{B^+_4}|\nabla w_0|^2 + |p_0|^2  dx \leq \liminf_{k \to \infty}\mint_{B^+_4}|\nabla w_k|^2 +|p_k|^2 dx \leq 1.$$
Therefore, we reach a contradiction to (\ref{48}) by (\ref{49}).
This completes the proof.
\end{proof}

\begin{lem}
\label{bducp} For any $0 < \epsilon  <1$, there exists a
sufficiently small $\delta=\delta(\epsilon)>0$ such that if $(u, p)$
is a weak solution pair to (\ref{st}) with (\ref{domain}) and the
following normalization conditions
\begin{equation}
\label{51} \mint_{\Omega_5}|\du|^2 +|p|^2 dx \leq 1 \textnormal{ and
} \mint_{\Omega_6}|\F|^2+|A-\bar{A}_{\Omega_6}|^2dx \leq \delta^2,
\end{equation}
then there exists a weak solution pair $(v, p_v)$ of (\ref{bdst}) in
$B_3^+$ with
$$\mint_{B_4^+}|\nabla v|^2 +|p_v|^2 dx \leq 1$$
such that
$$\mint_{\Omega_2}|\du-\nabla V|^2 +|p-p_V|^2 dx \leq \epsilon^2.$$
where $V$ is the zero extension of $v$ from $B_4^+$ to $B_4$ and
$p_V$ is an associated pressure of $V$.
\end{lem}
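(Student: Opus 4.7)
The plan is to perform a three-stage comparison $(u,p) \to (h,p_h) \to (w,p_w) \to (v,p_v)$ and combine it with a thin-slab correction on $\Omega_2 \setminus B_4^+$. In the first stage, compare $(u,p)$ with the homogeneous-$A$ solution $(h,p_h)$ from (\ref{bdsth}): the difference $(u-h, p-p_h)$ solves the inhomogeneous Stokes system on $\Omega_5$ with source $\F$ and vanishing trace on $\partial \Omega_5$, so Lemma \ref{divlem} (applicable since $(\delta,R)$-Reifenberg flatness implies John) yields
$$\mint_{\Omega_5}\big(|\nabla(u-h)|^2 + |p-p_h|^2\big)\, dx \leq c \mint_{\Omega_5}|\F|^2\, dx \leq c \delta^2.$$
In the second stage, compare $(h,p_h)$ with $(w,p_w)$ from (\ref{bdstw}): $(h-w, p_h-p_w)$ solves a Stokes system with frozen coefficient $\bar A_{B_6^+}$, source $-\mathrm{div}\,((A-\bar A_{B_6^+})\nabla h)$, and zero trace on $\partial \Omega_4$. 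Lemma \ref{divlem} combined with H\"{o}lder's inequality, $|A|\le L$, and the higher-integrability bound $\|\nabla h\|_{L^{r_2}(\Omega_4)}\le c$ from Lemma \ref{bdhigher} gives
$$\mint_{\Omega_4}\big(|\nabla(h-w)|^2 + |p_h-p_w|^2\big)\, dx \leq c \delta^{2-4/r_2}.$$
In the third stage, after rescaling $w$ by a universal constant so that (\ref{45}) holds, Lemma \ref{vcp} produces $(v,p_v)$ solving (\ref{bdst}) on $B_4^+$ with $\mint_{B_4^+}(|\nabla v|^2+|p_v|^2)\le 1$ and $\mint_{B_4^+}|w-v|^2\, dx \leq \epsilon_1^2$, where $\epsilon_1 \to 0$ as $\delta \to 0$.

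To convert these estimates into the desired gradient-and-pressure bound on $\Omega_2$, we split $\Omega_2 = B_2^+ \cup (\Omega_2 \setminus B_4^+)$, using $B_6^+ \subset \Omega_6$ together with (\ref{domain}) to see that the slab $\Omega_2 \setminus B_4^+$ has Lebesgue measure at most $c\delta$. On this slab, $V \equiv 0$ and $p_V \equiv 0$, so we must bound $\int(|\nabla u|^2 + |p|^2)$ there. Writing $\nabla u = \nabla(u-h) + \nabla h$ and $p = (p-p_h)+p_h$, Stage 1 controls the difference terms in $L^2$, while H\"{o}lder together with the higher integrability of $\nabla h$ (Lemma \ref{bdhigher}) and of $p_h$ (invoking (\ref{pesti})) bounds the remainder by $c\delta^{1-2/r_2}$. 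On $B_2^+$, where $V = v$ and $p_V = p_v$, decompose
$$\nabla u - \nabla v = \nabla(u-h) + \nabla(h-w) + \nabla(w-v),$$
the first two of which are already small in $L^2$ by Stages 1--2. For the final term, $w-v$ solves the homogeneous constant-coefficient Stokes system in $B_4^+$, so a Caccioppoli estimate via a Bogovskii-corrected cutoff test function, exploiting the a-priori bound $\|\nabla w\|_{L^2(B_4^+)} + \|\nabla v\|_{L^2(B_4^+)} \le c$ from (\ref{bdd}) and (\ref{inten}), upgrades $\|w-v\|_{L^2(B_4^+)} \leq \epsilon_1$ to $\|\nabla(w-v)\|_{L^2(B_2^+)}^2 \leq c\epsilon_1$. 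The pressure difference $p_w - p_v$ is treated in parallel via Ne\v{c}as' inequality applied to $\nabla(p_w - p_v) = \bar A \Delta(w - v)$ in $H^{-1}$.

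The main obstacle is this last step: boosting $L^2$-smallness of $w-v$ to $L^2$-smallness of $\nabla(w-v)$ on a half-ball whose flat face $T_2 \subset T_4$ lies on a boundary where $w$ does not vanish (only $v$ does), so that a naive interior Caccioppoli only covers subsets strictly inside $B_4^+$. To reach $T_2$ one must either combine the interior estimate with a boundary layer handled by interpolation against the a-priori $H^1$ bound on $w-v$, or perform the Caccioppoli with a cutoff touching $T_4$ and absorb the resulting boundary term using the trace of $w$ on $T_4$, which is uniformly controlled by $\|\nabla w\|_{L^2}$. Either approach yields universal constants, so choosing $\delta=\delta(\epsilon)$ small enough that all the powers of $\delta$ and $\epsilon_1$ sum to at most $\epsilon^2$ completes the proof.
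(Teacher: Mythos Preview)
Your Stages 1 and 2 match the paper's proof exactly, and you correctly invoke Lemma \ref{vcp} to obtain $(v,p_v)$. The divergence from the paper --- and the genuine gap --- is in the passage from $\|w-v\|_{L^2(B_4^+)}$-smallness to $\|\nabla(w-V)\|_{L^2(\Omega_2)}$-smallness.

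The paper does \emph{not} attempt a Caccioppoli estimate for $w-v$ on $B_4^+$. Instead it observes that the zero extension $V$ solves an explicit Stokes system on $\Omega_4$ (equation (\ref{53})) whose right-hand side is supported in the thin slab $\{x_n<0\}$ and is controlled in $L^\infty$ by Lemma \ref{bdlip}. Consequently $(w-V,\,p_w-p_V)$ solves a Stokes system on $\Omega_4$ with \emph{both} functions vanishing on $\partial_w\Omega_4$. A Caccioppoli inequality with cutoff $\eta\in C_c^\infty(B_3)$ then goes through with no boundary obstruction, yielding
\[
\mint_{\Omega_2}|\nabla(w-V)|^2\,dx \le c\Big(\mint_{\Omega_3}|w-V|^2\,dx + \text{slab source term}\Big),
\]
after which the right-hand side is split into $B_3^+$ (small by Lemma \ref{vcp}) and $\Omega_3\setminus B_3^+$ (small by Sobolev plus $|\Omega_3\setminus B_3^+|\le c\delta$). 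The pressure is handled analogously via the Ne\v{c}as-type inequality on $\Omega_2$.

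Your route instead confronts the Caccioppoli for $w-v$ on $B_4^+$ up to $T_2$, where $w$ does not vanish. Your fix (b) cannot give smallness: the trace of $w$ on $T_3$ is only $O(1)$-bounded by $\|\nabla w\|_{L^2}$, so the resulting boundary term is $O(1)$, not $o(1)$. Your fix (a) as stated --- ``interpolation against the a-priori $H^1$ bound'' --- is also insufficient: an $L^2$ bound on $\nabla(w-v)$ alone does not make $\int_{\{0<x_n<\eta\}}|\nabla(w-v)|^2$ small uniformly in the data. One would need uniform higher integrability of $\nabla w$ near $T_2$ (note $T_2$ may touch $\partial\Omega$, so interior Lipschitz regularity for $w$ is unavailable), together with the $L^\infty$ bound on $\nabla v$ from Lemma \ref{bdlip}, and then an optimization in $\eta$; you have not supplied this.

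Two smaller issues: (i) $p_V$ is \emph{not} the zero extension of $p_v$; it is the pressure associated to $V$ via (\ref{53}) on $\Omega_4$, so ``$p_V\equiv 0$ on the slab'' is incorrect (it is merely constant there after normalization). (ii) Your appeal to (\ref{pesti}) for higher integrability of $p_h$ is misplaced --- (\ref{pesti}) is a global estimate for zero-trace solutions on $\Omega$, whereas $h$ has trace $u$ on $\partial\Omega_5$; local higher integrability of $p_h$ would require a separate argument. The paper's approach sidesteps both issues entirely.
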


\begin{proof}
Let $(h, p_h)$ and $(w, p_w)$ be weak solution pairs to
(\ref{bdsth}) and (\ref{bdstw}), respectively. Applying Lemma
\ref{divlem} to the system which is derived by subtracting
(\ref{bdsth}) from (\ref{bdstu}), it follows that
\begin{equation}
\label{520} \mint_{\Omega_5}|\nabla(u-h)|^2+|p-p_h|^2 dx \leq c
\mint_{\Omega_5}|\F|^2 dx \leq c\delta^2,
\end{equation}
where the last inequality comes from  (\ref{51}).

By subtracting (\ref{bdstw}) from (\ref{bdsth}), we discover
\begin{equation*}
\left\{\begin{array}{rlcc}
\textrm {div } \bar{A}_{B_{6}^+}\nabla (h-w) -\nabla (p_h - p_w)&= - \textrm{div }\big ((A-\bar{A}_{B_{6}^+})\nabla h \big )&\textrm{ in } &\Omega_{4} \\
\textrm {div } (h-w)&=0  &\textrm{ in } &\Omega_{4} \\
h-w&=0  &\textrm{ on } &\partial \Omega_{4}.
\end{array}\right.
\end{equation*}
Using Lemma \ref{divlem}, we compute
\begin{eqnarray*}
&   &    \mint_{\Omega_4}|\nabla(h-w)|^2+|p_h-p_w|^2 dx \\
&   &    \hspace{2cm} \leq c\mint_{\Omega_4}|A-\bar{A}_{B^+_6}|^2 |\nabla h|^2 dx\\
&   &    \hspace{2cm} \leq c \left( \mint_{\Omega_4}\left| A-\bar {A}_{B_{6}^+}\right|^{\frac {2r_2}{r_2-2}} dx  \right)^{\frac {r_2-2}{r_2}} \left(\mint_{\Omega_{4}} |\nabla h|^{r_2} dx\right)^{\frac {2}{r_2}}\\
&   &    \hspace{2cm} \leq c \left( \mint_{\Omega_{4}}\left| A-\bar {A}_{B_{6}^+} \right|^2 \left| A-\bar {A}_{B_{6}^+} \right|^{\frac {4}{r_2-2}} dx  \right)^{\frac {r_2-2}{r_2}}\\
&   &    \hspace{2cm} \leq  c \left( \mint_{\Omega_{6}}\left| A-\bar {A}_{B_{6}^+}\right|^2 dx  \right)^{\frac {r_2-2}{r_2}}\\
&   &    \hspace{2cm} \leq  c  \left( \mint_{\Omega_{6}}\left| A-\bar {A}_{\Omega_6}\right|^2 dx +  \left| \bar {A}_{\Omega_6} - \bar {A}_{B_{6}^+} \right|^2  \right)^{\frac {r_2-2}{r_2}}\\
&   &    \hspace{2cm} \leq c \left(\delta^2 + \mint_{B_{6}^+} \left| A-\bar {A}_{\Omega_6}\right|^2 dx \right )^{\frac {r_2-2}{r_2}} \\
&   &    \hspace{2cm} \leq  c \left(\delta^2 + \frac{|\Omega_6|}{|B_6^+|}  \mint_{\Omega_{6}} \left| A-\bar {A}_{\Omega_6}\right|^2 dx \right )^{\frac {r_2-2}{r_2}}\\
&   &    \hspace{2cm} \leq  c\big(\delta^2 + \delta^2(1+\delta)\big)^{\frac {r_2-2}{r_2}}\\
&   &    \hspace{2cm} \leq  c \left(\delta^2+\delta^3 \right)^{\frac
{r_2-2}{r_2}},
\end{eqnarray*}
where we have used H\"{o}lder inequality, Lemma \ref{bdhigher}, (\ref{ell}),
(\ref{domain}), and (\ref{51}). Then (\ref{520}) implies
\begin{equation}
\label{511} \mint_{\Omega_4} |\nabla(u-w)|^2 +|p-p_w|^2 dx \leq c
\left ( \delta^2+\delta^{2-\frac{4}{r_2}}+\delta^{3-\frac{6}{r_2}}
\right).
\end{equation}
According to (\ref{51}) and (\ref{511}), we discover that
\begin{equation}
\label{5110} \mint_{\Omega_4} |\nabla w|^2 +|p_w|^2 dx \leq c.
\end{equation}
Then we apply Lemma \ref{vcp} to find that there exists a weak
solution pair $(v, p_v)$ to (\ref{bdst}) such that
\begin{equation}
\label{52} \mint_{B_4^+} |\nabla v|^2 +|p_v|^2 dx \leq 1 \textrm{
and } \mint_{B_4^+} |w- v|^2 dx \leq \epsilon_*^2,
\end{equation}
where $\epsilon_*$ is to be determined. We extend $v$ from $B_4^+$
to $B_4$ by zero and then denote it by $V$. A direct computation and
Lemma \ref{divlem} imply that $(V, p_V)$ is a weak solution pair to
\begin{equation}
\label{53} \left\{\begin{array}{rlcc} \textrm{div }(\bar{A}_{B_6^+}
\nabla V) -\nabla p_V &=- \frac{\partial}{\partial x_n}\left(
\bar{a}^{\alpha \beta}_{nn} \frac{\partial v^\alpha}{\partial
x_n}(x',0)\chi_{\R^n_-}(x) \right)
&\textrm{ in } &\Omega_4 \\
\textrm{div } V&=0 &\textrm{ in } &\Omega_4 \\
V&= 0 &\textrm{ on } &\partial_w \Omega_4,
\end{array}\right.
\end{equation}
where $\bar{A}_{B_6^+}=\bar{a}^{\alpha \beta}_{i j}$, $v=(v^1,
\cdots, v^n)$, $x'=(x_1, \cdots, x_{n-1})$ and $\chi$ is the
standard characteristic function.

Note that $V \in W^{1, 2}_\sigma(B_4)$ and $ \nabla V = \nabla v$
a.e. in $B_4^+$, as $v=0$ on $T_4$. Then it follows from Lemma
\ref{bdlip} and (\ref{52}) that
\begin{equation}
\label{54} \|\nabla V\|_{L^\infty(\Omega_3)} = \|\nabla
v\|_{L^\infty(B^+_3)} \leq c \| \nabla v \|_{L^{2}(B_4^+)} \leq  c.
\end{equation}

It follows from (\ref{bdstw}) and (\ref{53}) that $(w-V, p_w-p_V)$
is a weak solution pair to
$$
\left\{\begin{array}{rlcc} -\textrm{div }(\bar{A}_{B_6^+} \nabla
(w-V)) +\nabla (p_w - p_V)&= \left( \bar{a}^{\alpha \beta}_{nn}
\frac{\partial v^\alpha}{\partial x_n}(x',0)\chi_{\R^n_-}(x)
\right)_{x_n} &\textrm{in} &
\Omega_4 \\
\textrm{div } (w-V)&=0&\textrm{in} &  \Omega_4 \\
w-V&=0&\textrm{on}&  \hspace{-0.3cm}\partial_w \Omega_4.
\end{array}\right.
$$
Almost similarly as in the proof of Lemma \ref{bdhigher}, we obtain
the following Caccioppoli type inequality
\begin{equation}
\label{cac} \mint_{\Omega_2}|\nabla(w-V)|^2dx \leq c
\left(\mint_{\Omega_3}|w-V|^2dx + \mint_{\Omega_3}\left
|\bar{a}^{\alpha \beta}_{nn} \frac{\partial v^\alpha}{\partial
x_n}(x',0)\chi_{\R^n_-}(x) \right |^2dx \right).
\end{equation}
The first term in right-hand side is estimated as follows.
\begin{eqnarray*}
\mint_{\Omega_3}|w-V|^2 dx  &\leq& \mint_{B_3^+}|w-v|^2dx + \frac{1}{|\Omega_3|}\int_{\Omega_3 \setminus B_3^+}|w|^2 dx\\
&\leq& \epsilon^2_* +  \frac{1}{|\Omega_3|}\left( \int_{\Omega_3 \setminus B_3^+}|w|^{\frac{2n}{n-2}}dx \right)^{\frac{n-2}{n}} \left|\Omega_3 \setminus B_3^+ \right |^{\frac 2n} \\
&\leq& \epsilon^2_* +  c\delta^{\frac 2n} \mint_{\Omega_2 }|\nabla w|^2 dx\\
&\leq& \epsilon^2_* +  c\delta^{\frac 2n},
\end{eqnarray*}
where we have used Sobolev inequality, H\"{o}lder inequality, and
(\ref{bdd}). Using (\ref{ell}), (\ref{54}) and (\ref{domain}), we
estimate
\begin{eqnarray*}
\mint_{\Omega_3}\left |\bar{a}^{\alpha \beta}_{nn} \frac{\partial v^\alpha}{\partial x_n}(x',0)\chi_{\R^n_-}(x) \right |^2dx &\leq&  \frac{c}{|\Omega_3|}\int_{\Omega_3 \setminus B_3^+}\big|\nabla v(x',0) \big|^2 dx\\
&\leq& c \frac{|\Omega_3 \setminus B_3^+|}{|\Omega_3|}\\
&\leq& c \delta.
\end{eqnarray*}
Therefore, we deduce from (\ref{cac}) that
\begin{equation}
\label{55} \mint_{\Omega_2}|\nabla(w-V)|^2 dx \leq \epsilon^2_* +
c(\delta+\delta^{\frac 2n}).
\end{equation}
Since an associated pressure is determined uniquely up to a
constant, we assume $\mint_{\Omega_2}p_w-p_V\ dx=0$. Then by Lemma
\ref{lemdiv}, we can use inequality (0.8) in \cite{Gia}, which gives
us
\begin{flalign*}
\|p_w-p_V \|&_{L^2(\Omega_2)}  \\
 \leq c &\big \| \textrm{div }(\bar{A}_{B_6^+} \nabla (w-V))- \frac{\partial}{\partial x_n}\left( \bar{a}^{\alpha \beta}_{nn} \frac{\partial v^\alpha}{\partial x_n}(x',0)\chi_{\R^n_-}(x) \right) \big \|_{W^{-1, 2}(\Omega_2)^n}\\
 \leq c &\| \nabla k \|_{L^{2}(\Omega_2)^{n^2}},
\end{flalign*}
where $k \in W^{1, 2}_0(\Omega_2)^n$ satisfies the following weak
formulation
$$\int_{\Omega_2}\nabla k : \nabla \phi \, dx = \int_{\Omega_2} \bar{A}_{B_6^+} \nabla (w-V) : \nabla \phi+ \bar{a}^{\alpha \beta}_{nn} \frac{\partial v^\alpha}{\partial x_n}(x',0)\chi_{\R^n_-}(x) \frac{\partial \phi^\beta}{\partial_{x_n}} \, dx$$
for all $ \phi \in W^{1, 2}_0(\Omega_2)^n$. Taking $\phi = k$, we
have
\begin{eqnarray*}
\int_{\Omega_2} |\nabla k|^2 dx &\leq& \int_{\Omega_2}  \bar{A}_{B_6^+} \nabla (w-V) : \nabla k+ \bar{a}^{\alpha \beta}_{nn} \frac{\partial v^\alpha}{\partial x_n}(x',0)\chi_{\R^n_-}(x) \frac{\partial k^\beta}{\partial_{x_n}} \, dx\\
&\leq& c\int_{\Omega_2} |\nabla (w-V) |^2 dx+\frac12 \int_{\Omega_2}
|\nabla k |^2 dx + c\delta |\Omega_2|.
\end{eqnarray*}
 Therefore, we have
\begin{equation}
\label{56} \mint_{\Omega_2} |p_w-p_V|^2 dx \leq c\epsilon^2_* +
c(\delta+\delta^{\frac 2n}).
\end{equation}
 Combining (\ref{55}), (\ref{56}) with (\ref{511}) and taking $\epsilon_*$ and $\delta$ small enough, we complete the proof.

\end{proof}

\begin{lem}
\label{bdlem} Given $F \in L^2(\Omega)^{n^2}$, let $(u, p) \in
W^{1,2}_{0, \sigma}(\Omega)^n \times L^2(\Omega)$ be a weak solution
pair to the steady Stokes system (\ref{stp}). Then there is a
constant $N=N(\nu, L, n)>0$ so that for any $\epsilon>0$, there
exists a small $\delta=\delta(\epsilon, \nu, L, n)>0$ such that if
$A$ is $(\delta, 42)$-vanishing, $\Omega$ is $(\delta,
42)$-Reifenberg flat, and $B_r(y)$ for $r \in (0, 1]$ and $y \in
\Omega$ satisfies
\begin{equation}
\label{61} \left |\{x \in \Omega : \mathcal{M}(|\nabla
u|^2+|p|^2)(x) \leq N^2\} \cap B_r(y) \right | \leq \epsilon
|B_r(y)|,
\end{equation}
then we have
\begin{equation}
\label{611}
 \Omega_r(y)  \subset \{x \in \Omega : \mathcal{M}(|\du|^2+|p|^2)(x)>1\} \cup \{x\in \Omega: \mathcal{M}(|\F|^2)(x) >\delta^2 \}.
\end{equation}
\end{lem}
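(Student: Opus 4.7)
The plan is to prove the contrapositive of \eqref{611}. Assume there exists a ``good point'' $x_0 \in \Omega_r(y)$ where simultaneously $\mathcal{M}(|\du|^2+|p|^2)(x_0) \leq 1$ and $\mathcal{M}(|\F|^2)(x_0) \leq \delta^2$; I will produce a universal $N = N(\nu, L, n)$ and, for each $\epsilon \in (0,1)$, a corresponding $\delta$ so that
\[
\bigl|\{x \in \Omega : \mathcal{M}(|\du|^2+|p|^2)(x) > N^2\} \cap B_r(y)\bigr| < (1-\epsilon)|B_r(y)|,
\]
which negates \eqref{61}. I handle two geometric situations separately: the \emph{interior case} $B_{6r}(y) \subset \Omega$, and the \emph{boundary case} in which some $y_0 \in \partial\Omega$ satisfies $|y - y_0| \leq 6r$. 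The hypothesis constant $42 = 6 \cdot 7$ is tuned so that, after dilating by $\rho := r/6 \leq 1/6$, the dilated coefficient stays $(\delta, 7)$-vanishing and the dilated domain stays $(\delta, 7)$-Reifenberg flat by Lemma \ref{scaling}, which is the scale at which Lemmas \ref{intucp} and \ref{bducp} operate.

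In each case I apply Lemma \ref{scaling} with $\rho = r/6$, preceded in the boundary case by a translation sending $y_0$ to $0$ and a rotation bringing the Reifenberg tangent plane $L(y_0, 6r)$ to $\{x_n = 0\}$; an additional multiplicative normalization $\lambda$ makes the scaled data meet the hypotheses of the relevant comparison lemma. Using $\mathcal{M}(|\du|^2+|p|^2)(x_0) \leq 1$ together with the inclusion $B_5(0) \subset B_{12}(\tilde{x}_0)$ in rescaled coordinates and the Reifenberg measure density $|\tilde\Omega_\sigma| \geq c\sigma^n$ (valid for small $\delta$), I obtain $\mint_{\tilde\Omega_5}|\nabla \tilde u|^2 + |\tilde p|^2\, dx \leq c$ and, analogously, $\mint_{\tilde\Omega_6}|\tilde\F|^2\, dx \leq c\delta^2$. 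The $(\delta,42)$-vanishing of $A$ passes directly to the small-BMO bound on $\bar{A}_{B_6}$ or $\bar{A}_{B_6^+}$ needed in Lemmas \ref{intucp} and \ref{bducp}. Invoking the appropriate lemma with parameter $\epsilon_\ast$ (to be fixed) produces a comparison pair $(V, p_V)$ with
\[
\mint_{\tilde\Omega_2}|\nabla(\tilde u - V)|^2 + |\tilde p - p_V|^2\, dx \leq \epsilon_\ast^2,
\]
while Lemma \ref{intlip} or Lemma \ref{bdlip} gives $\|\nabla V\|_{L^\infty(\tilde\Omega_2)} + \|p_V\|_{L^\infty(\tilde\Omega_2)} \leq N_0$ for a universal $N_0$.

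Now fix $N^2 := \max(4 N_0^2, 2^{n+2})$, which is universal. For $x \in B_r(y)$ and $\sigma \geq r$, the inclusion $B_\sigma(x) \subset B_{2\sigma}(x_0)$ forces $\mint_{B_\sigma(x)}|\du|^2+|p|^2 \leq 2^n < N^2$, so only small-scale averages can drive $\mathcal{M}$ above $N^2$. A triangle inequality then yields
\[
\{\mathcal{M}(|\du|^2+|p|^2) > N^2\}\cap B_r(y) \subset E_1 \cup E_2,
\]
where $E_1 := \{\mathcal{M}_{loc}(|\nabla(\tilde u - V)|^2+|\tilde p - p_V|^2) > N^2/4\}$, $E_2 := \{\mathcal{M}_{loc}(|\nabla V|^2+|p_V|^2) > N^2/4\}$, and $\mathcal{M}_{loc}$ denotes the maximal function over $\tilde\Omega_2$. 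The choice of $N$ makes $E_2$ empty, while the weak-type $(1,1)$ bound in Lemma \ref{lem203} (with $\omega\equiv 1$) gives $|E_1| \leq C\epsilon_\ast^2 N^{-2}|B_r(y)|$ after undoing the scaling. Choosing $\epsilon_\ast$ small so that $C\epsilon_\ast^2 N^{-2} < 1-\epsilon$, and then $\delta = \delta(\epsilon_\ast) = \delta(\epsilon, \nu, L, n)$ from the comparison lemmas, closes the argument. The principal technical obstacle is the boundary case: one must check that $x_0$, after translation to $y_0$ and rotation, still lies inside $\tilde\Omega_5$ via the tube inclusion $\tilde\Omega_6 \subset B_6 \cap \{x_n > -12\delta\}$ for $\delta$ small, and must compare Euclidean ball-averages (defining $\mathcal{M}$) with averages over $\tilde\Omega_\sigma$ through the Reifenberg measure density; the rest amounts to sequential parameter selection of $N$, then $\epsilon_\ast$, then $\delta$.
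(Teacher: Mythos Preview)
Your approach is essentially the paper's: prove the contrapositive, split into an interior case ($B_{6r}(y)\subset\Omega$) and a boundary case, rescale to apply the comparison Lemmas~\ref{intucp}/\ref{bducp} and the Lipschitz bounds of Lemmas~\ref{intlip}/\ref{bdlip}, then use the weak-$(1,1)$ inequality for $\mathcal{M}$ from Lemma~\ref{lem203}.

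The one genuine slip is your uniform dilation $\rho=r/6$; it does not line up with the comparison lemmas and fails in the boundary case. The paper takes $\rho=r$ in the interior case (so $B_{6r}(y)$ becomes $B_6$, exactly the setting of Lemma~\ref{intucp}) and $\rho=7r$ in the boundary case after translating to the boundary point $y_0\in\partial\Omega\cap B_{6r}(y)$. The point is that the good point $y_1\in\Omega_r(y)$ satisfies $|y_1-y_0|<r+6r=7r$, so scaling by $7r$ sends $y_1$ into $B_1$; then $\tilde\Omega_5\subset\tilde\Omega_6(\tilde y_1)$ lets the maximal-function bound at $y_1$ control $\mint_{\tilde\Omega_5}|\nabla\tilde u|^2+|\tilde p|^2$. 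This is the true origin of the constant $42=6\cdot 7$: one needs the Reifenberg and small-BMO conditions at $y_0$ at radius $6\cdot 7r=42r\le 42$. With your choice $\rho=r/6$, the rescaled good point could land at distance up to $42$ from the origin, far outside the region $\Omega_6$ where Lemma~\ref{bducp} operates, so your inclusion $B_5(0)\subset B_{12}(\tilde x_0)$ breaks down there.

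A minor remark: the hypothesis~\eqref{61} as printed has a typo (it should read $\mathcal{M}(\cdot)>N^2$ with $\ge\epsilon|B_r(y)|$, consistent with the covering Lemma~\ref{lem206}); the paper's proof in fact shows $|\{\mathcal{M}>N^2\}\cap B_r(y)|<\epsilon|B_r(y)|$, not merely $<(1-\epsilon)|B_r(y)|$ as you target.
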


\begin{proof}
We prove this lemma by contraposition. Assume that $B_r(y)$
satisfies (\ref{61}) and that the conclusion (\ref{611}) is false.
Then there exists a point $y_1 \in \Omega_r(y)$ such that for all
$\rho>0$,
\begin{equation}
\label{612} \mint_{\Omega_\rho(y_1)} |\nabla u|^2+|p|^2 dx \leq 1,
\quad  \mint_{\Omega_\rho(y_1)} |\F|^2 dx \leq \delta^2.
\end{equation}
We first consider the interior case that $B_{6r}(y) \subset \Omega$.
Since $B_{5r}(y) \subset \Omega_{6r}(y_1)$, we see from (\ref{612})
that
\begin{equation}
\label{621} \mint_{B_{5r}(y)}|\du|^2+|p|^2 dx\leq
\displaystyle\frac{\left|\Omega_{6r}(y_1) \right|}{\left|B_{5r}(y)
\right|}\mint_{\Omega_{6r}(y_1)}|\du|^2 + |p|^2 dx \leq \left(
\frac{6}{5}\right)^{n} < 2^n.
\end{equation}
In the same way, it follows that
\begin{equation}
\label{622} \mint_{B_{5r}(y)}|\F|^2 dx \leq 2^n \delta^2.
\end{equation}
We assume $y=0$ and then consider the rescaled functions
$$\tilde{A}(x)=\frac{A(rx)}{2^{n/2}}, \, (\tilde{u}(x), \tilde{p}(x) )= \left( \frac{u(rx)}{2^{n/2}r}, \frac{p(rx)}{2^{n/2}} \right), \, \tilde{\F}(x)=\frac{\F(rx)}{2^{n/2}}, \textrm{ and } \tilde{\Omega}=\frac 1r \Omega.$$
With this setting, it is not difficult to see that all the
assumptions of Lemma \ref{intucp} are satisfied by Lemma
\ref{scaling}, (\ref{621}) and (\ref{622}). Then according to Lemma
\ref{intucp} and Lemma \ref{intlip}, after scaling back, we find
that there exists a pair $(v, p_v) \in W^{1, 2}_\sigma(B_{3r}^+)
\times L^2(B_{3r}^+)$ such that
\begin{equation}
\label{623} \|\nabla v \|_{L^\infty(B_{2r})^{n^2}}+ \| p_v
\|_{L^\infty(B_{2r})}\leq N_0 \textrm{ and } \mint_{B_{3r}}|\nabla
(u-v)|^2+|p-p_v|^2 dx \leq c_* \epsilon^2
\end{equation}
for some positive constant $N_0=N_0(n, \nu, L)$, where $c_*$ is to
be determined in a universal way. We write $N_1=max\{ 2N_0, 2^{n/2}\}$
to discover that
\begin{equation*}
\{ x \in B_r : \mathcal{M}(|\du|^2+|p|^2) > N_1^2 \} \subset \{ x
\in B_r : \mathcal{M}_{B_{3r}} (|\nabla (u-v)|^2+|p-p_v|^2) > N_0^2
\}.
\end{equation*}
From this inclusion, Lemma \ref{lem203} and (\ref{623}), we conclude
\begin{align*}
\frac{1}{|B_r|} \big | \{ x\in B_r : &\mathcal{M} \big( |\du|^2 +|p|^2 \big)(x) > N_1^2  \} \big | \\
\leq  &\frac{1}{|B_r|} \left|\left\{x\in B_{r} : \mathcal{M}_{B_{4r}}\left(|\nabla (u- v)|^2 +|p-p_v|^2 \right)(x)>N_1^{2} \right\}\right|\\
\leq  &  c \mint_{B_{r}}|\nabla(u- v)|^{2} +|p-p_v|^2  dx\\
\leq  & cc_*\epsilon^2 < \epsilon,
\end{align*}
by taking sufficiently small $c_*$ in order to have the last
inequality. This is a contradiction to (\ref{61}).

We next consider the boundary case that $B_{6r}(y) \not\subset
\Omega$. In this case, there is a boundary point $y_0 \in \partial
\Omega \cap B_{6r}(y)$. By the Reifenberg flatness condition and the
small BMO condition, we assume that there exists a new coordinate
system through suitable orientation and translation, depending on
$y_0$ and $r$, so that in this new coordinate system, the origin is
$y_0+\delta_0\overrightarrow{n_0}$ for some small $\delta_0>0$ and
for some inward unit normal $\overrightarrow{n_0}$. We now denote
the variable as $z$ in the new coordinate, $y_0=z_0$ and $y_1=z_1$.
The we find
\begin{equation}
\label{631} B_{42r}^+ \subset \Omega_{42r} \subset \{z \in B_{42r} :
z_n>-84r\delta\}
\end{equation}
and
\begin{equation}
\label{632} \mint_{\Omega_{42r}} \left
|A(z)-\overline{A}_{\Omega_{42r}} \right |^2 dz \leq \delta^2.
\end{equation}

Moreover, it follows from (\ref{612}) that
\begin{equation}
\label{633} \mint_{\Omega_{35r}}|\du|^2+|p|^2 dx \leq
\frac{|B_{42r}|}{|B_{35r}^+|} \mint_{\Omega_{42r}}|\du|^2+|p|^2 dx
\leq 2 \left( \frac 65 \right)^n  < 2^{n+1}
\end{equation}
and
\begin{equation}
\label{633-1} \mint_{\Omega_{35r}}|\F|^2 dx \leq 2^{n+1} \delta^2.
\end{equation}

We apply Lemma \ref{scaling} to $\rho=7r$ and
$\lambda=2^{\frac{n+1}{2}}$ to see that all the assumptions of Lemma
\ref{bducp} are satisfied by (\ref{631}), (\ref{632}) and
(\ref{633}). As a consequence, we find that there exists a function
$V \in W^{1,2}_\sigma(\Omega_{28r})$ such that
$$\|\nabla V\|_{L^\infty(\Omega_{21r})^{n^2}}+\| p_V \|_{L^\infty(\Omega_{21r})} \leq
N_2$$ for some constant $N_2=N_2(\nu, L, n)$ and
$$ \mint_{\Omega_{7r}}|\nabla(u-V)|^2 + |p-p_V|^2 dx \leq c_1 \epsilon,$$
where $c_1$ is to be determined.

As in the interior case, putting $N_3= \max \{2N_2, 2^{\frac n2}
\}$, we conclude
$$\frac{1}{|B_{7r}|} \left| \{z\in \Omega : \mathcal(|\du|^2+|p|^2)>N_3^2\}\cap B_{7r} \right | \leq cc_1 \epsilon,$$
which implies that
$$\frac{1}{|B_{r}|} \left| \{z\in \Omega : \mathcal(|\du|^2+|p|^2)>N_3^2\}\cap B_{r} \right | \leq cc_1 \epsilon<\epsilon,$$
by taking $c_1$ so that the last inequality holds. Finally, we set
$N=\max\{N_1, N_3\}$ to complete the proof.
\end{proof}

\begin{lem}
\label{bdlemw} Assume that $\omega \in A_s$ for some $s \in (1,
\infty)$. Given $F \in L^2_\omega(\Omega)^n$, let $(u, p) \in
W^{1,2}_{0, \sigma}(\Omega)^n \times L^2(\Omega)$ be a weak solution
pair to the steady Stokes system (\ref{stp}). Then there is a
constant $N=N(\nu, L, n)>0$ so that for any $\epsilon>0$ there
exists a small $\delta=\delta(\epsilon, \nu, L, q, \omega)>0$ such
that if $A$ is $(\delta, 42)$-vanishing, $\Omega$ is $(\delta,
42)$-Reifenberg flat, and $B_r(y)$ for $r \in (0, 1]$ and $y \in
\Omega$ satisfies
\begin{equation}
\label{70} \omega \left (\{x \in \Omega : \mathcal{M}(|\nabla
u|^2+|p|^2)(x) \leq N^2\} \cap B_r(y) \right ) \leq \epsilon \omega
\left ( B_r(y) \right),
\end{equation}
then we have
\begin{equation}
\label{71}
 \Omega_r(y)  \subset \{x \in \Omega : \mathcal{M}(|\du|^2+|p|^2)(x)>1\} \cup \{x\in \Omega: \mathcal{M}(|\F|^2)(x) >\delta^2 \}.
\end{equation}
\end{lem}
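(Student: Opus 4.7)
The plan is to reduce Lemma \ref{bdlemw} to the already-proved unweighted version, Lemma \ref{bdlem}, via the strong doubling property of $A_s$ weights recorded in Lemma \ref{wmrel}. This avoids re-running the approximation/comparison machinery under weighted measure, since the geometric and PDE content is entirely captured in the unweighted statement.

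Set $E = \{x \in \Omega : \mathcal{M}(|\du|^2 + |p|^2)(x) \leq N^2\} \cap B_r(y)$, so that the hypothesis (\ref{70}) reads $\omega(E) \leq \epsilon\, \omega(B_r(y))$. I would apply the lower bound of Lemma \ref{wmrel} to the inclusion $E \subset B_r(y)$ to obtain
$$\frac{1}{[\omega]_s}\left(\frac{|E|}{|B_r(y)|}\right)^s \leq \frac{\omega(E)}{\omega(B_r(y))} \leq \epsilon,$$
and hence $|E| \leq ([\omega]_s\,\epsilon)^{1/s}\, |B_r(y)|$. This is exactly the unweighted smallness condition (\ref{61}) of Lemma \ref{bdlem}, with $\epsilon$ there replaced by $\epsilon' := ([\omega]_s\,\epsilon)^{1/s}$.

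Thus I would simply invoke Lemma \ref{bdlem} at parameter $\epsilon'$: take $N$ to be the constant produced there (depending only on $\nu, L, n$, as the statement of Lemma \ref{bdlemw} asserts) and $\delta := \delta(\epsilon', \nu, L, n)$ as produced there. Since $\epsilon'$ depends on $\epsilon$ and on $[\omega]_s$ (with $s = q/2$ entering through $\omega \in A_{q/2}$), the resulting $\delta$ depends on $\epsilon, \nu, L, n, q, \omega$, matching the claimed dependence; and the conclusion (\ref{611}) delivered by Lemma \ref{bdlem} is verbatim the desired conclusion (\ref{71}).

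I do not expect a serious obstacle here: all the substantive analysis — rescaling (Lemma \ref{scaling}), interior and boundary comparison estimates (Lemmas \ref{intucp} and \ref{bducp}), and the selection of $N$ — has already been carried out inside Lemma \ref{bdlem}. The only point requiring attention is that $\epsilon' \to 0$ as $\epsilon \to 0$, which is immediate from the explicit form $\epsilon' = ([\omega]_s\,\epsilon)^{1/s}$, so that a genuinely positive $\delta > 0$ is produced for each positive $\epsilon$. Conceptually, the role of Muckenhoupt weights is just to allow the transfer between unweighted and weighted density estimates on balls via Lemma \ref{wmrel}, which is the only weight-specific input used.
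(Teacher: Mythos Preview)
Your proposal is correct and follows essentially the same approach as the paper: transfer the weighted density hypothesis to an unweighted one via Lemma~\ref{wmrel}, then invoke Lemma~\ref{bdlem} with the adjusted parameter. The only cosmetic difference is that the paper applies the \emph{upper} bound in Lemma~\ref{wmrel} (with exponent $\tau$ and constant $\mu$) to the set $\{\mathcal{M}>N^2\}\cap B_r(y)$, obtaining $\epsilon' = (\epsilon/\mu)^{1/\tau}$, whereas you apply the lower bound to the set as literally written, obtaining $\epsilon' = ([\omega]_s\,\epsilon)^{1/s}$; both choices accomplish the same reduction.
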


\begin{proof}
From Lemma \ref{wmrel} and (\ref{70}), we have
\begin{align*}
\big| \{x \in \Omega : \mathcal{M}(|\du|^2 + &|p|^2) >N^2\} \cap B_r(y) \big |\\
\geq &\left(\frac 1\mu \frac{\omega(\{ x\in \Omega : \mathcal{M}(|\du|^2 + |p|^2) >N^2\} \cap B_r(y) ) }{\omega(B_r(y))}\right)^{\frac 1\tau}|B_r(y)|\\
\geq &\left(\frac{\epsilon}{\mu}\right)^{\frac 1\tau}|B_r(y)|.
\end{align*}
We use Lemma \ref{bdlem} with $\epsilon$ replaced by
$\left(\frac{\epsilon}{\mu}\right)^{\frac 1\tau}$, to find
$\delta=\delta(\epsilon, \nu, L, n, \omega, s)$ so that (\ref{71})
holds.
\end{proof}

We are now all set to prove the main result. \vspace{0.2cm}

\textbf{\textit{Proof of Theorem \ref{mainthm}}}. We first assert
that
\begin{equation}
\label{80} \|\du\|_{L^q_\omega(\Omega)^{n^2}} + \| p
\|_{L^q_\omega(\Omega)} \leq c, \; \textrm{ if } \| \F
\|_{L^q_\omega(\Omega)^{n^2}} \leq \delta
\end{equation}
for some constant $c=c(n, q, \nu, L, \Omega, \omega)$.
To do this,
we compute
\begin{equation}
\label{801} \|\F\|^2_{L^2(\Omega)^{n^2}} =
\int_\Omega|\F|^2\omega^{\frac 2q}\omega^{-\frac 2q}dx \leq
\left(\int_\Omega |\F|^q \omega \, dx \right)^{\frac 2q} \left(
\int_\Omega \omega^{\frac{-2}{q-2}}dx \right)^{\frac{q-2}{q}}.
\end{equation}
Since $\Omega$ is bounded, there is a ball $B_{\frac d2} (x_0)
\supset \Omega$ for some $x_0 \in \Omega$, where $d$ is the diameter
of $\Omega$. Using (\ref{aq}) and (\ref{weight}), we estimate
\begin{flalign*}
\left( \int_\Omega \omega^{\frac{-2}{q-2}}dx \right)^{\frac {q-2}{2}} &\leq \left( \int_{B_{\frac d2}(x_0)} \omega^{\frac{-2}{q-2}}dx \right)^{\frac {q-2}{2}}\\
& \hspace{-2cm}=  \left( \mint_{B_{\frac d2}(x_0)} \omega \, dx \right) \left( \mint_{B_{\frac d2}(x_0)} \omega \, dx \right)^{-1} \left( \mint_{B_{\frac d2}(x_0)} \omega^{\frac{-2}{q-2}}dx \right)^{\frac q2-1}\left| B_{\frac d2}\right|^{\frac q2-1}\\
&\hspace{-2cm} \leq \frac{\left| B_{\frac d2} \right|^{\frac
q2}}{\omega \left(B_{\frac d2}(x_0) \right)}[\omega]_{\frac q2}\leq
\frac{d^{\frac{nq}{2}}|B_1|^{\frac q2}}{\omega(\Omega)}[\omega]_{\frac
q2}.
\end{flalign*}
Thus from (\ref{80}) and (\ref{801}), we have
\begin{equation}
\label{81-1} \|\F\|^2_{L^2(\Omega)^{n^2}} \leq
\frac{d^n|B_1|}{\omega(\Omega)^{\frac 2q}}[\omega]^{\frac 2q}_{\frac
q2} \delta^2.
\end{equation}

We now take $\epsilon \in (0, 1)$ and $N$ and choose the
corresponding $\delta$ given by Lemma \ref{bdlemw}. Then write
\begin{equation*}
\begin{array}{c}
\mathfrak{C} =\left\{x\in\Omega: \M \left(|\du|^2 +|p|^2
\right)(x)>N^2 \right\} \textrm{ and }\\ \mathfrak{D}
=\left\{x\in\Omega: \M \left(|\du|^2 + |p|^2 \right)(x)>1 \right\}
\cup \left\{x\in\Omega: \M \left(|\F|^2\right)(x)>\delta^2 \right\}.
\end{array}
\end{equation*}
By using Lemma \ref{lem203}, (\ref{l2esti}) and (\ref{81-1}), one
can check that the first hypothesis of Lemma \ref{lem206} as
follows.
\begin{eqnarray*}
|\mathfrak{C}\cap B_1(y)| & \leq &  c \int_{\Omega}|\du|^2 +|p|^2 dx\\
& \leq & c \int_{\Omega}|\F|^2 dx \\
& \leq & c\delta^{2}\\
& \leq & \left( \frac\epsilon\mu  \right)^{\frac 1\tau} |B_1|,
\end{eqnarray*}
by choosing a small enough $\delta$, if necessary, in order to get
the last inequality. Then Lemma \ref{wmrel} implies
$$\omega \big( \mathfrak{C} \cap B_1(y) \big) \leq \mu \left(\frac{|\mathfrak{C} \cap B_1(y)|}{|B_1|} \right)^\tau \omega \big(B_1(y) \big) \leq \epsilon \omega \big(B_1(y) \big).$$

On the other hand, the second hypothesis of Lemma \ref{lem206}
follows directly from Lemma \ref{bdlemw}. Therefore thanks to Lemma
\ref{lem206}, we have
\begin{equation}
\begin{aligned}
\label{81}
\omega  &\left( \left\{x\in\Omega:\M \left(|\du|^2 +|p|^2 \right)(x)>N^2 \right\} \right) \\
	 &\leq \epsilon_1 \omega \left(\left \{x \in \Omega: \M
\left(|\du|^2 +|p|^2 \right)(x)>1 \right \} \right)+ \epsilon_1 \omega
\left( \left \{x \in \Omega: \M \left(|\F|^2\right)(x)>\delta^2 \right\}\right)
\end{aligned}
\end{equation}
for $\epsilon_1=c^*\epsilon$, where $c^*$ depends only on $n, q,
[\omega]_{\frac q2}$. Using an iteration argument from (\ref{81}),
we further have the following power decay estimate.
\begin{flalign}
\label{82}
\omega \bigg(\{x\in\Omega:\M (|\du|^2&+|p|^2 )(x)>N^{2k} \} \bigg) \nonumber\\
&\leq \epsilon_1^k \omega \big(\left\{x\in\Omega:\M \left(|\du|^2 +|p|^2 \right)(x)>1 \right \} \big)\\
&+ \displaystyle\sum_{i=1}^{k}\epsilon_1^i \omega \left(\left
\{x\in\Omega:\M \left(|\F |^2 \right)(x)>\delta^2 N^{(k-i)2}\right\}
\right). \nonumber
\end{flalign}
Then using this estimate (\ref{82}), we estimate
\begin{flalign*}
\sum_{k=1}^{\infty}N^{qk} &   \omega \bigg( \big\{x\in \Omega:\M (|\du|^2+|p|^2)(x)>N^{2k}\big\} \bigg) \\
\leq  &\sum_{k=1}^{\infty} (N^{q}\epsilon_1)^k \omega \left(\left\{x\in\Omega:\M \left(|\du|^2+|p|^2 \right)(x)>1 \right\} \right) \\
&   \ \  + \sum_{i=1}^{\infty} (N^{q}\epsilon_1)^i \underbrace{ \sum_{k=i}^{\infty} N^{q(k-i)} \omega \left(\left \{x\in\Omega:\M \left(|\F |^2 \right)(x)>\delta^2 N^{(k-i)2}\right\}\right)}_{S}\\
 \leq & \sum_{k=1}^{\infty} (N^{q}\epsilon_1)^k \omega
(\Omega)+\sum_{i=1}^{\infty} (N^{q}\epsilon_1)^i \  S.
\end{flalign*}

We next show that $S$ is finite.  In light of Lemma \ref{lem201},
Lemma \ref{lem203} and the assumption that $\| \F
\|_{L^q_\omega(\Omega)^{n^2}} \leq \delta$, we compute
\begin{equation*}
S \leq c \frac 1\delta \| \F \|_{L^{q}_\omega(\Omega)^{n^2}} \leq c
\end{equation*}
for some $c=c(n, q, \nu, L, \omega, \Omega)$. Consequently, we
discover
$$ \sum_{k=1}^{\infty}N^{qk} \omega \left(\left\{x\in\Omega:\M \left(|\du|^2+|p|^2 \right)(x)>N^{2k}\right\} \right)  \leq c\sum_{k=1}^{\infty} (N^q\epsilon_1)^{k} \leq c,$$
by selecting $\epsilon$ so small that $N^q\epsilon_1 < 1$.
Therefore, the  assertion (\ref{80}) is now proved by Lemma
\ref{lem201}.

To derive the desired estimate (\ref{main}) in Theorem
\ref{mainthm}, we consider the normalized functions as
$$u_\lambda=\frac{u}{\lambda}, \ p_\lambda=\frac{p}{\lambda} \textrm{ and } \F_\lambda=\frac{\F}{\lambda},$$
where $\lambda=\delta^{-1}\| \F \|_{L^q_\omega(\Omega)^{n^2}}$. Then
it follows that
$$\| \F_\lambda \|_{L^q_\omega(\Omega)^{n^2}} \leq \delta.$$
Then (\ref{80}) implies that there is a constant $c=c(n, q, \nu, L,
\omega, \Omega)$ such that
$$\| \du_\lambda \|_{L^q_\omega(\Omega)^{n^2}}+\| p_\lambda \|_{L^q_\omega(\Omega)} \leq c,$$
which is (\ref{main}). This completes the proof.

\begin{ack}
 S. Byun was supported by the National Research Foundation of Korea (NRF) grant funded by the Korea Government (NRF-2015R1A4A1041675). H. So was supported by the National Research Foundation of Korea (NRF) grant funded by the Korea Government (NRF-2015R1A2A1A15053024).
\end{ack}

\bibliographystyle{amsplain}

\end{document}